\documentclass[11pt,a4paper]{amsart}
\usepackage[margin=25mm]{geometry}
\usepackage[numbers]{natbib}
\usepackage{hyperref}

\usepackage{amssymb,amsmath}
\usepackage{amsthm}
\usepackage{bbm, bm}
\usepackage{stmaryrd}
\usepackage[usenames,dvipsnames]{xcolor}
\usepackage{color}
\usepackage{graphicx}
\usepackage{caption}
\usepackage{float}
\usepackage{wrapfig}

\usepackage{lineno}

\usepackage{abstract}

\righthyphenmin10
\sloppy

\input xy
\xyoption{all} 


\numberwithin{equation}{section}

\newtheorem{theorem}{Theorem}[section]
\newtheorem{corollary}[theorem]{Corollary}

\newtheorem{proposition}[theorem]{Proposition}

\theoremstyle{definition}
\newtheorem{definition}[theorem]{Definition}
\newtheorem{example}[theorem]{Example}

\newenvironment{warning}[1][Warning.]{\begin{trivlist}
\item[\hskip \labelsep {\bfseries #1}]}{\end{trivlist}}

\newenvironment{remark}[1][Remark.]{\begin{trivlist}
\item[\hskip \labelsep {\bfseries #1}]  }{ \end{trivlist}}

\newcommand{\Id}{\mathbbmss{1}}

\newcommand{\rmd}{\textnormal{d}}
\newcommand{\rme}{\textnormal{e}}

\newcommand*{\longhookrightarrow}{\ensuremath{\lhook\joinrel\relbar\joinrel\rightarrow}}

\DeclareMathOperator{\Vect}{Vect}

\DeclareMathOperator{\Diff}{Diff}

\DeclareMathOperator{\Hom}{Hom}

\newcommand{\catname}[1]{\textnormal{\texttt{#1}}}

\font\black=cmbx10 \font\sblack=cmbx7 \font\ssblack=cmbx5 \font\blackital=cmmib10  \skewchar\blackital='177
\font\sblackital=cmmib7 \skewchar\sblackital='177 \font\ssblackital=cmmib5 \skewchar\ssblackital='177
\font\sanss=cmss10 \font\ssanss=cmss8 
\font\sssanss=cmss8 scaled 600 \font\blackboard=msbm10 \font\sblackboard=msbm7 \font\ssblackboard=msbm5
\font\caligr=eusm10 \font\scaligr=eusm7 \font\sscaligr=eusm5  \font\fraktur=eufm10
\font\sfraktur=eufm7 \font\ssfraktur=eufm5 
\font\bsymb=cmsy10 scaled\magstep2
\def\all#1{\setbox0=\hbox{\lower1.5pt\hbox{\bsymb
       \char"38}}\setbox1=\hbox{$_{#1}$} \box0\lower2pt\box1\;}
\def\exi#1{\setbox0=\hbox{\lower1.5pt\hbox{\bsymb \char"39}}
       \setbox1=\hbox{$_{#1}$} \box0\lower2pt\box1\;}

\def\tx#1{{\fam0\relax#1}}

\newfam\bifam
\textfont\bifam=\blackital \scriptfont\bifam=\sblackital \scriptscriptfont\bifam=\ssblackital

\newfam\blfam
\textfont\blfam=\black \scriptfont\blfam=\sblack \scriptscriptfont\blfam=\ssblack

\newfam\bbfam
\textfont\bbfam=\blackboard \scriptfont\bbfam=\sblackboard \scriptscriptfont\bbfam=\ssblackboard

\newfam\ssfam
\textfont\ssfam=\sanss \scriptfont\ssfam=\ssanss \scriptscriptfont\ssfam=\sssanss
\def\sss#1{{\fam\ssfam\relax#1}}

\newfam\clfam
\textfont\clfam=\caligr \scriptfont\clfam=\scaligr \scriptscriptfont\clfam=\sscaligr

\newfam\frfam
\textfont\frfam=\fraktur \scriptfont\frfam=\sfraktur \scriptscriptfont\frfam=\ssfraktur

\def\hpb#1{\setbox0=\hbox{${#1}$}
    \copy0 \kern-\wd0 \kern.2pt \box0}
\def\vpb#1{\setbox0=\hbox{${#1}$}
    \copy0 \kern-\wd0 \raise.08pt \box0}

\def\pmb#1{\setbox0\hbox{${#1}$} \copy0 \kern-\wd0 \kern.2pt \box0}
\def\pmbb#1{\setbox0\hbox{${#1}$} \copy0 \kern-\wd0
      \kern.2pt \copy0 \kern-\wd0 \kern.2pt \box0}
\def\pmbbb#1{\setbox0\hbox{${#1}$} \copy0 \kern-\wd0
      \kern.2pt \copy0 \kern-\wd0 \kern.2pt
    \copy0 \kern-\wd0 \kern.2pt \box0}
\def\pmxb#1{\setbox0\hbox{${#1}$} \copy0 \kern-\wd0
      \kern.2pt \copy0 \kern-\wd0 \kern.2pt
      \copy0 \kern-\wd0 \kern.2pt \copy0 \kern-\wd0 \kern.2pt \box0}
\def\pmxbb#1{\setbox0\hbox{${#1}$} \copy0 \kern-\wd0 \kern.2pt
      \copy0 \kern-\wd0 \kern.2pt
      \copy0 \kern-\wd0 \kern.2pt \copy0 \kern-\wd0 \kern.2pt
      \copy0 \kern-\wd0 \kern.2pt \box0}


\mathchardef\za="710B  
\mathchardef\zb="710C  
\mathchardef\zg="710D  
\mathchardef\zd="710E  
\mathchardef\zve="710F 
\mathchardef\zz="7110  
\mathchardef\zh="7111  
\mathchardef\zvy="7112 
\mathchardef\zi="7113  
\mathchardef\zk="7114  
\mathchardef\zl="7115  
\mathchardef\zm="7116  
\mathchardef\zn="7117  
\mathchardef\zx="7118  
\mathchardef\zp="7119  
\mathchardef\zr="711A  
\mathchardef\zs="711B  
\mathchardef\zt="711C  
\mathchardef\zu="711D  
\mathchardef\zvf="711E 
\mathchardef\zq="711F  
\mathchardef\zc="7120  
\mathchardef\zw="7121  
\mathchardef\ze="7122  
\mathchardef\zy="7123  
\mathchardef\zf="7124  
\mathchardef\zvr="7125 
\mathchardef\zvs="7126 
\mathchardef\zf="7127  
\mathchardef\zG="7000  
\mathchardef\zD="7001  
\mathchardef\zY="7002  
\mathchardef\zL="7003  
\mathchardef\zX="7004  
\mathchardef\zP="7005  
\mathchardef\zS="7006  
\mathchardef\zU="7007  
\mathchardef\zF="7008  
\mathchardef\zW="700A  
\mathchardef\zC="7009  

\newcommand{\be}{\begin{equation}}
\newcommand{\ee}{\end{equation}}

\newcommand{\bea}{\begin{eqnarray}}
\newcommand{\eea}{\end{eqnarray}}
\def\*{{\textstyle *}}
\newcommand{\R}{{\mathbb R}}

\newcommand{\C}{{\mathbb C}}

\newcommand{\s}{{\textstyle *}}




\def\Hom{\sss{Hom}}

\def\Vect{\sss{Vect}}




\def\sT{{\sss T}}

\def\sS{{\sss S}}

\def\xi{\tx{i}}


\def\s*{{\scriptstyle *}}


\newcommand{\beas}{\begin{eqnarray*}}
\newcommand{\eeas}{\end{eqnarray*}}


\title{On Lie semiheaps and ternary  principal bundles} 
\author{Andrew James Bruce } 
   \address{Department of Mathematics,
The Computational Foundry,
Swansea University Bay Campus,
Fabian Way,
Swansea, SA1 8EN}  
   \email{a.j.bruce@swansea.ac.uk, ~andrewjamesbruce@googlemail.com}

   \date{\today}
 \begin{document}
 \maketitle
\vspace{-20pt}
\begin{abstract}{\noindent We introduce the notion of a Lie semiheap as a smooth manifold equipped with a para-associative ternary product.  For a particular class of Lie semiheaps we establish the existence of left-invariant vector fields. Furthermore, we show how such manifolds are related to Lie groups and establish the analogue of principal bundles in this ternary setting. In particular, we generalise the well-known `heapification' functor to the ambience of Lie groups and principal bundles.}\\

\noindent {\Small \textbf{Keywords:} Heaps;~Semiheaps;~Principal Bundles;~Group Actions;~Generalised Associativity}\\
\noindent {\small \textbf{MSC 2020:} 20N10;~22E15 }
\end{abstract}

\section{Introduction and Motivation}
\subsection{Introduction and Background} 
 Pr\"{u}fer \cite{Prufer:1924} and  Baer \cite{Baer:1929} introduced the notion of a \emph{heap} (also known as a \emph{torsor} or \emph{groud} or \emph{herd}) as a set with a ternary operation satisfying some natural axioms including a generalisation of associativity. A heap should be thought of as a group in which the identity element is absent. Given a group,  it can be turned into a heap by defining the ternary operation as $(x,y,z) \mapsto xy^{-1}z$. In fact, up to isomorphism, every heap  arises from a group in this way.  Conversely, by selecting an element in a heap, one can reduce the ternary operation to a  group operation, such that the chosen element is the identity element. However, we do not quite have an equivalence of categories here as passing from a heap to a group is not natural (there is a lot of choice here). That said, there is an isomorphism of categories between pointed heaps and groups. We will show that this isomorphism carries over to the smooth setting.\par 
A \emph{semiheap} (sometimes also referred to as a \emph{semitorsor}) is a  set $S$, equipped with a ternary operation $[x,y,z] \in S$ that satisfies the \emph{para-associative law} 
\begin{equation*}
\big[ [x_1,x_2,x_3] , x_4,x_5 \big] = \big [ x_1,[x_4,x_3,x_2],x_5\big] =  \big[ x_1,x_2,[x_3 , x_4,x_5] \big]\, ,
\end{equation*}
for all $x_i \in S$.   A semiheap is said to be an \emph{abelian semiheap} if $[x_1,x_2, x_3] =  [x_3, x_2, x_1]$ for all $x_i \in S$.  A semiheap is a \emph{heap} when all its elements are \emph{biunitary}, i.e., $[y,x,x] =y=[x,x,y]$, for all $y$ and $x \in S$. We remark that a general semiheap is not associated (up to isomorphism) with a group, this is a particular property of heaps. However, every semiheap can be embedded in an involuted semigroup. For more details about heaps and related structures the reader should consult Hollings \& Lawson \cite{Hollings:2017} who discuss Wagner’s original development of the theory.\par 
In this paper, we extend the definition of a semiheap to the category of smooth manifolds, i.e., we will study Lie semiheaps as a natural generalisation of Lie groups. As a side remark, the notion of the ``Lie category'' goes back to Ehresmann (1959) and Grothendick (1960/61). We then proceed to define and study semiheap bundles, which are akin to principal bundles in the ternary setting.  We show that principal bundles provide a class of semiheap bundles.  The main complications as compared with the standard situation with Lie groups is that we do not have an identity element, and the left/right translations are not diffeomorphisms. While left-invariant vector fields make sense, and we will explore this, one cannot construct a finite dimensional Lie algebra associated with any Lie semiheap. Thus, many of the statements found in standard Lie theory are somewhat obscured.  None-the-less, there is still a rich  and potentially useful theory here to be uncovered. 

\subsection{Motivation}
The main motivation for this work comes from the recent renewed interest in ternary operations such as trusses, which are ``ring-like" structures with a ternary ``addition" (see \cite{Brzezinski:2018, Brzezinski:2019, Brzezinski:2020, Brzezinski:2019b}). In particular, the question of defining geometry based on ternary operations has arisen. With this in mind, the first steps in this direction was to find geometric examples of heaps and semiheaps, and from there attempt to replace binary operations with ternary ones. A very recent observation by Breaz, Brzezi\'{n}ski, Rybo{\l}owicz \&  Saracco, (see \cite{Breaz:2022}), is that affine spaces can be defined without reference to vector spaces, using  two ternary operations; one entirely on the set and  the other representing an action of the field of scalars. This reformulation is consistent with the idea that we should not fix an observer or gauge when formulating physics. Here, selecting the zero vector is like picking an observer in physics. Even in high school physics, energy, voltage and position are not `absolutely measurable', but one needs to fix a zero point and measure things relative to this chosen point - mathematically one should be thinking in terms of torsors.  A gauge or frame-independent formulation of analytical dynamics requires affine bundles and, because of this, Grabowska, Grabowski and Urba\'{n}ski (see \cite{Grabowska:2003}) defined Lie brackets on sections of affine bundles -  this too has been reformulated by  Brzezi\'{n}ski using ternary operations (see \cite{Brzezinski:2022}). Semiheaps and ternary algebras have been applied to quantum mechanics (see \cite{Bruce:2022,Kerner:2008,Kerner:2018}). Heaps also appear in knot theory in various guises (see for example \cite{Saito:2021}).\par 
We must of course mention torsors in algebraic geometry, which are generalisations of principal bundles in algebraic topology. Torsors in this setting  can also be viewed as generalising  Galois extension as found in abstract algebra. There are deep links here with various branches of mathematical physics, including deformation quantisation (see, for example, \cite{Kontsevich:1999}).  It is also remarkable that heaps and trusses appear behind elliptical curves and their endomorphisms (see \cite{Brzezinski:2022b}.)\par  
Another source of motivation comes from the applications of nonassociative algebras in geometry and geometric mechanics, for example smooth loopoids of Grabowski \& Ravanpak (see \cite{Grabowski:2016,Grabowski:2021}).  Ternary operations are, by definition, nonassociative. 
\subsection*{A Historical Note}  
After the first draft of this paper, the author was notified that semiheap bundles were considered by Konstantinova in 1978, albeit in Russian, see \cite{Konstantinova:1978}.
\section{Lie Semiheaps}
\subsection{Semiheaps}
In this subsection we review, and slightly reformulate the notion of a ternary multiplication and a semiheap. Nothing in this subsection is new. Our main reference is Hollings \& Lawson \cite{Hollings:2017}. Let $S \in \catname{Set}$ be a  set (possibly empty). We define $S^{(n)} :=  S \times S \times \cdots \times S$ where there are $n$-factors, and similar for set theoretical maps.  The group $\sS_3$ acts on $S^{(3)}$ in a canonical way. Specifically, and vital for our later needs, 
\begin{align*}
  s_{13}: & ~  S^{(3)} \longrightarrow S^{(3)}\\
& (x,y,z) \longmapsto (z,y,x)\,.
\end{align*} 
 A ternary multiplication/product we write as
  \begin{align*}
  \mu : & ~  S^{(3)} \longrightarrow S \\
& (x,y,z) \longmapsto [x,y,z]\,.
\end{align*} 
When convenient, we will write $\mu(x,y,z)$ for the ternary multiplication.
\begin{definition}\label{def:SemiHeap}
A set (possibly empty) equipped with a ternary product $(S, \mu)$ is a \emph{semiheap} if the product is para-associative, i.e., the following diagram is commutative:
\begin{center}
\leavevmode
\begin{xy}
(0,15)*+{S^{(2)}\times S^{(3)}}="a"; (50,15)*+{S \times S^{(3)}\times S}="b"; (100,15)*+{S^{(3)} \times S^{(2)}}="c";%
(0,-10)*+{S^{(3)}}="d"; (50,-10)*+{S^{(3)}}="e"; (100,-10)*+{S^{(3)}}="f";%
(50, -25)*+{S}="g";
{   \ar@{=} "a";"b"}; {\ar@{=} "b";"c"} ;%
{\ar "a";"d"}?*!/^10mm/{\Id^{(2)}\times \mu }; {\ar "b";"e"}?*!/^15mm/{\Id\times(\mu \circ s_{13})\times \Id }; {\ar "c";"f"}?*!/^10mm/{ \mu \times \Id^{(2)}};%
{\ar "d";"g"}?*!/^3mm/{\mu};{\ar "e";"g"}?*!/^3mm/{\mu}; {\ar "f";"g"}?*!/_3mm/{\mu};%
\end{xy}
\end{center}
\end{definition}
The para-associative property concretely means
$$[x_1, x_2 , [x_3, x_4, x_5]] = [x_1, [x_4, x_3, x_2], x_5] = [[x_1, x_2, x_3], x_4, x_5]\, ,$$
for all $x_i \in S$. A semiheap is a \emph{heap} if every element of the semiheap is biunitary, i.e., $y = [x,x,y] = [y,x,x]$.
 \begin{example}
 Let $M$ be a smooth manifold, then the set of smooth functions on $M$ can be considered as a semiheap with the ternary operation being $[f_1, f_2, f_3] :=  f_1 f_2 f_3$. As the algebra of smooth functions is commutative, the para-associativity of the ternary product is clear.
 \end{example}
\begin{example}
Let $M$ be a smooth manifold, then the set of nowhere vanishing smooth functions on $M$ can be considered as a heap with the ternary operation being $[f_1, f_2, f_3] :=  f_1 (f_2)^{-1} f_3$.
\end{example}
\begin{example}
Consider the group of diffeomorphisms between two smooth manifolds $\Diff(M,N)$. Then we have a natural heap structure given by
$$[\phi_1, \phi_2, \phi_3] := \phi_1 \circ \phi_2^{-1}\circ \phi_3\,,$$
for all $\phi_i \in\Diff(M,N)$.
\end{example}
\begin{example}
Let $(M, g)$ be a Riemannian manifold. Then on the set of vector fields on $M$ we can define a ternary product as
$$[X,Y,Z] := X \, g(Y,Z)\,.$$
The fact that $g(W,X)g(Y,Z) = g(W, Xg(Y,Z)) = g(Y, g(X, W)Z)$ for arbitrary vector fields implies that the ternary product is para-associative, i.e., we have a semiheap on the set  of vector fields on a Riemannian manifold. There is also a heap structure on any vector space (or more generally an affine space) given by $\{X, Y, Z \} :=  X -Y +Z$, which is independent of any Riemannian structure or similar. 
\end{example}
\begin{remark}
The analogue construction for a symplectic manifold $(M, \omega)$  does not quite give a semiheap as $\omega(W, X) = - \omega(X,W)$ and so there is an extra sign present when examining the para-associativity.
\end{remark}
\begin{example}
Let $\mathcal{C}(E)$ be the affine space of linear connections on a vector bundle $\pi : E \rightarrow M$. Then $\mathcal{C}(E)$ is canonically a heap with the ternary operation being $[\nabla_1, \nabla_2, \nabla_3] :=  \nabla_1 - \nabla_2 + \nabla_3$.
\end{example}
\begin{definition}
Let $(S', \mu')$ be a semiheap and let $S \subseteq S'$ be a subset. Then $S$ is a \emph{subsemiheap} of $S'$ if it is closed with respect to the ternary product. In other words, $(S, \mu := \mu'|_{S})$ is a semiheap.
\end{definition}
\begin{definition}
Let $(S, \mu)$ and $(S', \mu')$ be semiheaps. A \emph{homomorphism of semiheaps} is a map $\phi : S \rightarrow S' $ that satisfies
 $$\phi \circ \mu = \mu' \circ \phi^{(3)}\,.$$
\end{definition} 
Concretely, a map is a homomorphism of semiheaps if 
$$\phi[x_1, x_2, x_3] =  [\phi(x_1),  \phi(x_2), \phi(x_3)]'\,,$$
for all $x_i \in S$. We thus obtain the category of semiheaps, which we denote as $\catname{SHeap}$.
\begin{example}
If $(S, \mu)$ is a subsemiheap of $(S', \mu')$, then the inclusion map $\iota :  S \longhookrightarrow S'$ is a homomorphism of semiheaps.
\end{example}
\begin{example}
Let $\phi:  (M, g) \rightarrow (M', g')$ be an isometry of Riemannian manifolds. We observe that 
$$\phi_*\big(X g(Y,Z) \big) = \phi_*(X)g(Y,Z)= \phi_*(X)g'(\phi_*(Y),\phi_*(Z))\,,$$
and so an isometry induces a homomorphism of the associated semiheaps.
\end{example}
\begin{definition}
Let $(S, \mu)$ and $(S', \mu')$ be semiheaps and $\phi : (S, \mu) \rightarrow (S', \mu')$ be a homomorphism of semiheaps. Then the \emph{homomorphic image} of $\phi$ is the set 
$$\phi(S) = \{ a \in S' ~~ |~~ \exists\, x \in S~ \textnormal{such that}~ a = \phi(x)\} \subseteq S'\,.$$
\end{definition}  
A standard argument from universal algebra, that is sets with operations, establishes the following.
\begin{proposition}
Let $\phi : (S, \mu) \rightarrow (S', \mu')$ be a homomorphism of semiheaps. The homomorphic image $\phi(S) \subseteq S'$ is a semiheap.
\end{proposition}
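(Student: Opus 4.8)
The plan is to show that $\phi(S)$ is a subsemiheap of $(S',\mu')$ in the sense of the definition above, i.e.\ that it is closed under $\mu'$; para-associativity of the restricted product is then automatic, since it is a universally quantified identity that already holds on all of $S'$.

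First I would verify closure. Take $a,b,c \in \phi(S)$. By definition of the homomorphic image there are $x,y,z \in S$ with $a = \phi(x)$, $b = \phi(y)$, $c = \phi(z)$. Applying the homomorphism identity $\phi \circ \mu = \mu' \circ \phi^{(3)}$ gives
$$\mu'(a,b,c) = \mu'(\phi(x),\phi(y),\phi(z)) = \phi(\mu(x,y,z)) \in \phi(S),$$
since $\mu(x,y,z) \in S$. Hence $\mu'$ restricts to a ternary product $\mu := \mu'|_{\phi(S)}$ on $\phi(S)$, well defined precisely because $(\phi(S))^{(3)} \subseteq (S')^{(3)}$ and the image lands back in $\phi(S)$.

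Next I would observe that $(\phi(S), \mu'|_{\phi(S)})$ satisfies the para-associative law. The commuting diagram of Definition~\ref{def:SemiHeap} (equivalently the identity $[[x_1,x_2,x_3],x_4,x_5]' = [x_1,[x_4,x_3,x_2],x_5]' = [x_1,x_2,[x_3,x_4,x_5]]'$) holds for all tuples of elements of $S'$; restricting all entries to $\phi(S)$ — which is legitimate by the closure just established, including the use of the swap $s_{13}$ inside the middle bracket — the three expressions still coincide. Therefore $(\phi(S), \mu'|_{\phi(S)})$ is a semiheap, and in fact a subsemiheap of $(S',\mu')$ with inclusion $\phi(S) \longhookrightarrow S'$ a homomorphism.

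The argument is essentially the standard ``image of a homomorphism is a subalgebra'' fact from universal algebra, specialised to a single ternary operation, so there is no real obstacle; the only point requiring a moment's care is confirming that the closure computation uses nothing beyond the single homomorphism identity, and in particular that the $s_{13}$-twist appearing in the para-associative axiom does not take us outside $\phi(S)$ — which it cannot, since $\phi(S)$ is a subset of $S'$ and the twist merely permutes entries before $\mu'$ is applied.
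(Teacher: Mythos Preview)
Your proof is correct and is precisely the ``standard argument from universal algebra'' that the paper invokes without spelling out: show closure of $\phi(S)$ under $\mu'$ via the homomorphism identity, then inherit para-associativity as a restriction of an identity already holding on $S'$. The paper gives no further detail, so your write-up is a faithful expansion of the intended argument.
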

\begin{definition}\label{def:RLActions}
Let $(S, \mu)$ be a semiheap and fix a pair $(x_1, x_2) \in S^{(2)}$. The map 
\begin{align*}
R_{x_1 x_2}: &~ S \longrightarrow S\\
& x \mapsto [x, x_1, x_2]
\end{align*}
is a \emph{right translation}. Similarly, the map
\begin{align*}
L_{x_1 x_2}: &~ S \longrightarrow S\\
& x \mapsto [ x_1, x_2, x]
\end{align*}
is a \emph{left translation}.
\end{definition}
We will denote the set of right translations of a semiheap by $R(S)$ and the set of left translations of a semiheap by $L(S)$.
\begin{proposition}\label{prop:TransSemi}
Let $(S, \mu)$ be a semiheap. Then the sets $R(S)$ and $L(S)$ of right and left translations, respectively, are semigroups. 
\end{proposition}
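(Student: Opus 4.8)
The plan is to use the fact that a semigroup is nothing but a set equipped with an associative binary operation, and that composition of self-maps of a set is always associative. Hence, taking composition of maps as the candidate binary operation on $R(S)$ and on $L(S)$, the only thing that actually needs proving is that each of these two sets is closed under composition. Everything then follows from a single application of the para-associative law, where the one point requiring care is the middle term, in which the arguments are permuted according to $s_{13}$.

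First I would treat $R(S)$. Fix two right translations $R_{x_1 x_2}$ and $R_{y_1 y_2}$ and evaluate the composite on an arbitrary $x \in S$:
\begin{equation*}
(R_{y_1 y_2} \circ R_{x_1 x_2})(x) = \big[[x, x_1, x_2], y_1, y_2\big]\,.
\end{equation*}
Applying para-associativity in the form $[[a_1,a_2,a_3],a_4,a_5] = [a_1,[a_4,a_3,a_2],a_5]$ with $(a_1,\dots,a_5) = (x,x_1,x_2,y_1,y_2)$ rewrites the right-hand side as $[x,[y_1,x_2,x_1],y_2] = R_{[y_1,x_2,x_1]\,y_2}(x)$. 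Since $x$ was arbitrary, $R_{y_1 y_2} \circ R_{x_1 x_2} = R_{[y_1,x_2,x_1]\,y_2}$, which is again a right translation; thus $R(S)$ is closed under composition and is therefore a semigroup.

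The computation for $L(S)$ is the mirror image: for left translations $L_{x_1 x_2}$, $L_{y_1 y_2}$ and $x \in S$,
\begin{equation*}
(L_{y_1 y_2} \circ L_{x_1 x_2})(x) = \big[y_1, y_2, [x_1, x_2, x]\big] = \big[[y_1, y_2, x_1], x_2, x\big] = L_{[y_1,y_2,x_1]\,x_2}(x)\,,
\end{equation*}
this time using para-associativity in the form $[a_1,a_2,[a_3,a_4,a_5]] = [[a_1,a_2,a_3],a_4,a_5]$; hence $L_{y_1 y_2} \circ L_{x_1 x_2} = L_{[y_1,y_2,x_1]\,x_2} \in L(S)$, so $L(S)$ is a semigroup. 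I do not anticipate a genuine obstacle here: the statement is essentially a reformulation of para-associativity, and the only thing to watch is the index bookkeeping in the twisted middle slot, together with the (free) observation that associativity of the induced operation costs nothing because it is ordinary composition of maps. If one adopts the convention that semigroups must be nonempty, the trivial case $S=\emptyset$ should be excluded, but since the paper explicitly permits empty semiheaps I would simply allow the empty semigroup.
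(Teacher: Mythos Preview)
Your proof is correct and follows essentially the same route as the paper: establish closure of $R(S)$ and $L(S)$ under composition via a single application of para-associativity. Two minor differences are worth noting. First, for right translations you invoke the middle equality of the para-associative law, obtaining $R_{y_1 y_2}\circ R_{x_1 x_2}=R_{[y_1,x_2,x_1]\,y_2}$, whereas the paper uses the outer equality to get $R_{x_3 x_4}\circ R_{x_1 x_2}=R_{x_1\,[x_2,x_3,x_4]}$; both are legitimate right translations, so either works. Second, you observe that associativity of the induced binary operation is automatic because it is ordinary composition of self-maps, while the paper nonetheless verifies associativity by an explicit computation; your remark is a genuine (if small) economy.
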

\begin{proof}
We will consider right translations as these are of importance for bundles. The case of left translations follows in more-or-less the same way.
\begin{enumerate}
\item The composition of two right translations is a right translation:
\begin{align*}
R_{x_3 x_4}\circ R_{x_1 x_2}(-) &= R_{x_3 x_4}([- , x_1, x_2])\\
								& = [[- , x_1, x_2 ],x_3,x_4]\\
								& = [-, x_1, [x_2, x_3, x_4]]\\
								&= R_{x_1[x_2, x_3, x_4] }(-)\,.
\end{align*}
\item Associativity of the composition: 
\begin{align*}
\big ( R_{x_5 x_6} \circ R_{x_3 x_4} \big)\circ R_{x_1 x_2} &= R_{x_3 [x_4,x_5, x_6]} \circ R_{x_1 x_2}\\
& = R_{x_1[x_2, x_3, [x_4,x_5,x_6 ]]}\\
& = R_{x_1[x_2, x_3, x_4],x_5,x_6 ]}\\
&= R_{x_5 x_6} \circ R_{x_1 [x_2 x_3 x_4]}\\
&=  R_{x_5 x_6} \circ\big ( R_{x_3 x_4} \circ R_{x_1 x_2}\big)\,.
\end{align*}
\end{enumerate}  
\end{proof}
For left translations, it can directly be shown that $L_{x_1 x_2} \circ L_{x_3 x_4} = L_{[x_1 x_2, x_3] x_4}$ and that we have associativity of the composition of left translations. 
\begin{definition}
Let $(S, \mu)$ be a semiheap and fix a pair $(x_1, x_2) \in S^{(2)}$. The map 
\begin{align*}
C_{x_1 x_2}: &~ S \longrightarrow S\\
& x \mapsto [x_1, x, x_2]
\end{align*}
is a \emph{centric translation}.
\end{definition}
\begin{warning}
Due to para-associativity, we do not have a semigroup structure on the set of centric translations. The composition of centric translations is not a centric translation.
\end{warning}
\begin{proposition}
Left and right translations on a semiheap $S$ commute, i.e., for all $x_i \in S$, 
$$L_{x_1 x_2} \circ R_{x_3  x_4} =  R_{x_3  x_4}\circ L_{x_1 x_2} \,.$$
\end{proposition}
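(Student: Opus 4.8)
The plan is to verify the identity directly from the para-associative law by expanding both composites on an arbitrary element and showing they coincide. Concretely, for $x \in S$ we have
\[
L_{x_1 x_2} \circ R_{x_3 x_4}(x) = L_{x_1 x_2}\big([x, x_3, x_4]\big) = \big[x_1, x_2, [x, x_3, x_4]\big],
\]
while on the other side
\[
R_{x_3 x_4} \circ L_{x_1 x_2}(x) = R_{x_3 x_4}\big([x_1, x_2, x]\big) = \big[[x_1, x_2, x], x_3, x_4\big].
\]
So the claim reduces to the single instance
\[
\big[x_1, x_2, [x, x_3, x_4]\big] = \big[[x_1, x_2, x], x_3, x_4\big]
\]
of the para-associative law, namely the equality of its first and third expressions with the substitution of variables $(x_1,x_2,x,x_3,x_4)$ into the slots labelled $(x_1,x_2,x_3,x_4,x_5)$ in Definition~\ref{def:SemiHeap}.

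First I would state that it suffices to check the equality pointwise, i.e. on an arbitrary $x \in S$, since two maps $S \to S$ are equal iff they agree on all points. Then I would write out the two displayed expansions above, invoking only the definitions of $L_{x_1 x_2}$ and $R_{x_3 x_4}$ from Definition~\ref{def:RLActions}. Finally I would quote the outer equality $[[x_1,x_2,x_3],x_4,x_5] = [x_1,x_2,[x_3,x_4,x_5]]$ of the para-associative law, which is exactly the middle-free part of the commutative diagram (the two "$\ar@{=}$" legs composed with the outer $\mu$'s), applied with the indicated relabelling.

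There is essentially no obstacle here: this is a one-line consequence of para-associativity once the translations are unwound, and notably it uses only the first-equals-third part of the law, never touching the $s_{13}$ twist in the middle term. The only thing to be careful about is bookkeeping of which variable sits in which argument slot, so I would keep the names $x_1, x_2$ for the left translation and $x_3, x_4$ for the right translation throughout and let $x$ be the running point, so the pattern matches Definition~\ref{def:SemiHeap} verbatim. I expect the author's proof to be a two- or three-line display doing precisely this.
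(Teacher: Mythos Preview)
Your proposal is correct and matches the paper's proof essentially line for line: the author picks an arbitrary $x \in S$, expands both composites using the definitions of $L_{x_1 x_2}$ and $R_{x_3 x_4}$, and equates $[x_1,x_2,[x,x_3,x_4]] = [[x_1,x_2,x],x_3,x_4]$ by the para-associative law. Your prediction that only the first-equals-third part of para-associativity is needed, with no use of the $s_{13}$ twist, is also exactly right.
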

\begin{proof}
Let $x_i$ and $x \in S$. Then directly
$$L_{x_1 x_2}\big(R_{x_3 x_4}(x) \big) =  [x_1, x_2 ,[x, x_3, x_4]] = [[x_1, x_2, x], x_3, x_4] =  R_{x_3  x_4}\big( L_{x_1 x_2}(x) \big)\,,$$ 
using the para-associative property.
\end{proof}
\subsection{Lie Semiheaps}
Recall that a Lie group is a group object in the category of smooth manifolds, i.e., a smooth manifold equipped with three smooth maps, the unit, inverse and multiplication maps, that satisfy the standard axioms of a group. The reader may consult Mac Lane \cite[pages 75-76]{MacLane:1988} for details of group objects in categories.  A Lie semiheap is a semiheap object in the category of smooth manifolds. That is, a smooth manifold together with a ternary operation that satisfies the axioms of a semiheap (see Definition \ref{def:SemiHeap}).  More formally,  we make the natural definition  of a Lie semiheap. 
\begin{definition}
A \emph{Lie semiheap} is a semiheap object in the category of real, finite dimensional, Hausdorff and second countable smooth manifolds. In particular, the map $\mu : S^{(3) } \rightarrow S$ is a smooth map. A \emph{Lie semiheap homomorphism} is a smooth map $\psi : S \rightarrow S'$ that is also a semiheap homomorphism.
\end{definition}
\begin{remark}
The notion of a \emph{topological semiheap} is evident as a semiheap object in the category of topological spaces and so the ternary product is continuous.   We will restrict attention to the smooth case in this paper.  Moreover, \emph{complex Lie semiheaps} can similarly be defined as semiheap objects in the category of complex manifolds, so in particular, the ternary product is holomorphic.  One can also consider Lie semiheaps in the category of supermanifolds - the functor of points is expected to be a useful concept in this context. We will only consider real and finite dimensional manifolds here.
\end{remark}
The resulting category of Lie semiheaps we denote as $\catname{LieSHp}$. Within this category is the full subcategory of Lie heaps, which we denote as $\catname{LieHp}$. Specifically, the forgetful functor $\textrm{F}: \catname{LieHp} \rightarrow \catname{LieSHp}$, which forgets the biunitary property of all elements, is is full, faithful, and injective. Generically, we will not distinguish Lie heaps and Lie semiheaps, but rather consider Lie heaps as particular Lie semiheaps. Furthermore,  there are other full subcategories  $\catname{AbLieSHp}$ of abelian Lie semiheaps, and $\catname{AbLieHp}$ of abelian Lie heaps.
\begin{remark}
Recall that the category of Lie groups is, similarly, a full subcategory of the category of Lie semigroups. The forgetful functor in this case forgets the identity element and the inverse map. 
\end{remark}
\begin{example}
If $(S, \eta)$ is a Lie semiheap, then $(S, \eta^{\textrm{op}})$ is also a Lie semiheap where we define $[x,y,z]^{\textrm{op}} := [z,y,x]$. Clearly, a Lie semiheap is abelian if and only if $\eta^{\textrm{op}} = \eta$.
\end{example}
\begin{example}
A singleton $\{\star\}$ considered as a zero-dimensional smooth manifold has a unique heap operation $\{\star\} \times \{\star\} \times \{\star\} \rightarrow \{\star\}$.  This ternary product is smooth and so we have the \emph{trivial Lie (semi)heap}.
\end{example}  
\begin{example}
The empty set $\emptyset$, can be considered (conventionally) as a smooth manifold and comes with a unique heap operation $\emptyset \times \emptyset \times \emptyset \rightarrow \emptyset$. This ternary product is by definition smooth and so we have the \emph{empty Lie (semi)heap}. 
\end{example}
\begin{remark} 
Universal statements hold for the empty set, however existence statements are false. Thus, the empty set is not a (Lie) group as we require the existence of an identity element. 
\end{remark}
From the well-known observations about the category of smooth manifold and semiheaps, the following is evident (see \cite[page 20]{MacLane:1988} for the notion of initial and terminal objects).
\begin{proposition}
The empty semiheap is the initial object on the category of Lie semiheaps. The trivial Lie semiheap is the terminal object in the category of semiheaps.
\end{proposition}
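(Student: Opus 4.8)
The plan is to verify the two universal properties directly, since "initial" and "terminal" are defined by the existence and uniqueness of morphisms, and in this setting morphisms are just smooth semiheap homomorphisms.

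First I would check that the empty Lie semiheap $\emptyset$ is initial. Given any Lie semiheap $(S,\mu)$, there is exactly one set-theoretic map $\emptyset \to S$, namely the empty function. This map is vacuously smooth (as noted in the earlier remark, universal statements hold for the empty set), and it is vacuously a semiheap homomorphism, since the condition $\phi\circ\mu = \mu'\circ\phi^{(3)}$ is a universally quantified statement over elements of $\emptyset^{(3)} = \emptyset$. Uniqueness is immediate because the hom-set $\Hom_{\catname{Set}}(\emptyset, S)$ is a singleton to begin with. Hence $\emptyset$ is initial in $\catname{LieSHp}$.

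Next I would check that the trivial Lie semiheap $\{\star\}$ (the zero-dimensional one-point manifold with its unique ternary product) is terminal. For any Lie semiheap $(S,\mu)$, the constant map $c_\star : S \to \{\star\}$ sending every point to $\star$ is smooth, and it is a semiheap homomorphism because both $c_\star\big(\mu(x_1,x_2,x_3)\big)$ and $\mu'\big(c_\star(x_1),c_\star(x_2),c_\star(x_3)\big)$ equal $\star$, the only element available. Uniqueness holds because any map into a one-element set is forced to be the constant map. Therefore $\{\star\}$ is terminal in $\catname{LieSHp}$. (I would note, as a minor point, that the statement as written says "terminal object in the category of semiheaps" but clearly means $\catname{LieSHp}$; the same argument works verbatim in $\catname{SHeap}$ as well.)

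There is no real obstacle here — this is the standard fact that in any concrete category whose objects and morphisms are sets-with-structure and structure-preserving maps (here further cut down to smooth manifolds and smooth maps), the empty object is initial and a terminal object of the underlying category that carries a unique compatible structure remains terminal. The only thing one must be a little careful about is that smoothness is preserved: the empty map and the constant map into a point are both smooth in the manifold sense, which is what licenses the argument in the category $\catname{LieSHp}$ rather than merely $\catname{SHeap}$. I would phrase the whole proof in two short paragraphs mirroring the two bullet points above.
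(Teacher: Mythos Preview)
Your proof is correct. The paper does not actually give a proof of this proposition; it simply asserts that the statement ``is evident'' from ``well-known observations about the category of smooth manifolds and semiheaps,'' so your direct verification of the two universal properties is exactly the argument the paper is deferring to.
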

We will need a slightly modified notion of a Lie semiheap in which a distinguished point is identified. 
\begin{definition}
A \emph{pointed Lie semiheap} is a triple $(S, \mu, \textrm{pt})$, such that $(S, \mu)$ is a Lie semiheap, and $\textrm{pt}\in S$ is a distinguished point. A \emph{pointed Lie semiheap homomorphism} $\phi :  (S, \mu, \textrm{pt}) \rightarrow (S', \mu', \textrm{pt}')$ is a Lie semiheap homomorphism such that $\phi(\mathrm{pt}) = \mathrm{pt}'$. The resulting category of pointed Lie semiheaps will be denoted as $\catname{LieShp}_*$.
\end{definition}
There is the obvious forgetful functor $\catname{LieShp}_* \rightarrow \catname{LieShp}$ in which the distinguished point is forgotten.
\begin{example}
Any Lie group can be considered as a pointed Lie semiheap (in fact a pointed Lie heap). In particular, if $G$ is a Lie group, then we define $[g_1, g_2, g_3] := g_1 g_2^{-1}g_3$. As a map $\mu : G^{(3)}\rightarrow G$, it is clear that the ternary multiplication is smooth as, by definition, the group product and inversion are smooth. If $\psi : G \rightarrow G'$ is  Lie group homomorphism, then it is also a Lie semiheap homomorphism. This is easily seen from the properties of a Lie group homomorphism, i.e., $\psi [g_1, g_2, g_3] = \psi(g_1 g_2^{-1}g_3) = \psi(g_1) \psi(g_2)^{-1}\psi(g_3) = [\psi(g_1), \psi(g_2),\psi(g_3)]'$. The distinguished point is the identity element $e \in G$. Furthermore, for any Lie group homomorphism we have that $\psi(e) = e'$.
\end{example}
The previous example shows that we have a functor from the category of Lie groups to the category of pointed Lie semiheaps. More formally, we have the following definition - already a well known result in the setting of groups and heaps.
\begin{definition}\label{def:HeapFunc}
The \emph{heapification functor} is the functor
$$\mathcal{H} : ~ \catname{LieGrp} \longrightarrow \catname{LieShp}_*\,,$$
that on objects acts as
$$(G, \textrm{m}, \textrm{i})\longmapsto (G, \mu)\,,$$
where $\mu(g_1,g_2, g_3) = [g_1, g_2, g_3] :=  g_1 g_2^{-1}g_3$, the distinguished point is the identity element $e\in G$, and on morphisms $\psi : G \rightarrow G'$, acts as $\psi^\mathcal{H}:=  \psi$.
\end{definition}
As a matter of notation, we will set $S_G := \mathcal{H}(G)= (G, \mu, e)$ to denote the pointed Lie (semi)heap associated with a Lie group $G$.\par 
We remind the reader that a functor is full if it is surjective on the hom sets and is faithful if it is injective on the hom sets. A functor is said to be fully faithful if is full and faithful, i.e., is a bijection between the hom sets. 
\begin{proposition}\label{prop:HeapFunFulFaith}
The heapification functor is fully faithful.
\end{proposition}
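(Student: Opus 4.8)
The plan is to unwind the definition of the heapification functor $\mathcal{H}$ on morphisms. Recall that on objects $\mathcal{H}(G) = (G,\mu,e)$ with $\mu(g_1,g_2,g_3) = g_1 g_2^{-1} g_3$, and on a Lie group homomorphism $\psi : G \to G'$ it acts by $\psi^{\mathcal{H}} := \psi$, regarded now as a map between the underlying pointed Lie semiheaps. So the assignment on hom-sets is literally the identity on underlying set maps; the content is that it is a well-defined \emph{bijection} $\Hom_{\catname{LieGrp}}(G,G') \to \Hom_{\catname{LieShp}_*}(\mathcal{H}(G),\mathcal{H}(G'))$.

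First I would check faithfulness: if $\psi_1^{\mathcal{H}} = \psi_2^{\mathcal{H}}$ then $\psi_1 = \psi_2$ as maps of sets, hence as Lie group homomorphisms. This is immediate since $\mathcal{H}$ does not alter the underlying map. Next, fullness: given a pointed Lie semiheap homomorphism $\phi : (G,\mu,e) \to (G',\mu',e')$, I must show $\phi$ is in fact a Lie group homomorphism for the group structures recovered from the pointed heaps (i.e.\ the original group structures on $G$ and $G'$). The key computation is to express the group operations of $G$ in terms of $\mu$ and the distinguished point $e$: one has $g_1 g_2 = \mu(g_1, e, g_2) = [g_1, e, g_2]$ and $g^{-1} = \mu(e, g, e) = [e, g, e]$, and the identity is $e$ itself. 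Then, using $\phi(e) = e'$ (the pointedness condition) together with $\phi[\,\cdot\,,\cdot\,,\cdot\,] = [\phi(\cdot),\phi(\cdot),\phi(\cdot)]'$, I compute $\phi(g_1 g_2) = \phi[g_1,e,g_2] = [\phi(g_1), \phi(e), \phi(g_2)]' = [\phi(g_1), e', \phi(g_2)]' = \phi(g_1)\phi(g_2)$, and similarly $\phi(g^{-1}) = \phi[e,g,e] = [e',\phi(g),e']' = \phi(g)^{-1}$. Smoothness of $\phi$ is already assumed (it is a Lie semiheap homomorphism), so $\phi$ is a Lie group homomorphism; setting $\psi := \phi$ gives $\psi^{\mathcal{H}} = \phi$, establishing fullness.

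I do not expect a serious obstacle here: the whole argument rests on the elementary observation that in the heap $S_G$ associated to a group, the multiplication, inversion, and identity of $G$ are all recoverable from $\mu$ once the distinguished point $e$ is fixed — this is the "pointed heaps are groups" correspondence alluded to in the introduction, now carried out in the smooth category. The only point requiring a word of care is that one should confirm the group structure recovered from $(G,\mu,e)$ via $g_1 g_2 := [g_1,e,g_2]$ coincides with the original group structure on $G$ (it does, by direct substitution into $[g_1,g_2,g_3] = g_1 g_2^{-1} g_3$), so that "Lie group homomorphism for the recovered structure" is the same as "Lie group homomorphism $G \to G'$". This is what makes the hom-set map a genuine bijection rather than merely a bijection onto some reindexed set. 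It is worth remarking that this proposition, together with the observation that $\mathcal{H}$ is \emph{not} essentially surjective (most pointed Lie semiheaps do not come from groups), shows $\mathcal{H}$ is a fully faithful but non-essentially-surjective embedding; if one restricts the target to pointed Lie heaps one recovers the isomorphism of categories promised in the introduction.
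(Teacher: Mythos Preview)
Your proof is correct and follows essentially the same approach as the paper: faithfulness is immediate since $\mathcal{H}$ acts as the identity on morphisms, and fullness is obtained by setting the middle argument of the ternary product equal to the distinguished point $e$ to recover the group multiplication and then using $\phi(e)=e'$. Your version is slightly more thorough (you also verify preservation of inverses and explicitly note that the recovered group structure coincides with the original one), but the argument is the same.
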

\begin{proof}
As the heapificaion functor does not change a given Lie group homomorphism, it is just considered as being in a different category, it is obviously faithful.  The only thing to check is that any $\bar{\psi} : S_G \rightarrow S_{G'}$ is also a group homomorphism. Note that, by definition we have $\bar{\psi}(e) = e'$. As a homomorphism of semiheaps, it must be the case that
 $$ \bar{\psi}(g_1 g_2^{-1} g_3) = \bar{\psi}(g_1)\big(\bar{\psi}(g_2)\big)^{-1}\bar{\psi}(g_3)\,.$$
Setting $g_2 =  e$ in the above gives $\bar{\psi}(g_1  g_3) =\bar{\psi}(g_1)(\bar{\psi}(e))^{-1}\bar{\psi}(g_3)= \bar{\psi}(g_1)e'\bar{\psi}(g_3) =\bar{\psi}(g_1)\bar{\psi}(g_3)$. This result implies that we have a group homomorphism, and so the heapification functor is full. 
\end{proof}
\begin{remark} 
If we consider the category of Lie semiheaps, rather than the category of pointed Lie semiheaps, then the resulting heapification functor is \emph{not} fully faithful as there is no reason why $\bar{\psi}(e) = e'$ for an arbitrary homomorphism of Lie semiheaps.  However, it may still be convenient to consider the range of the heapification functor as $\catname{LieShp}$.
\end{remark}
\begin{example}
$\R^n$ with its standard topology and smooth structure is an abelian Lie group with respect to addition. The associated heap operation is thus $[\mathrm{x}, \mathrm{y}, \mathrm{z}] := \mathrm{x} -  \mathrm{y} + \mathrm{z}$. The distinguished point is the zero element $\mathrm{0}\in \R^n$. 
\end{example}
\begin{example}
$\R^\times$, the set of non-zero real numbers, with its standard topology and smooth structure is an abelian Lie group with respect to multiplication. The associated heap operation is thus $[\mathrm{x}, \mathrm{y}, \mathrm{z}] := \mathrm{x} \mathrm{y}^{-1} \mathrm{z}$. The distinguished element is $1 \in \R^\times$. 
\end{example}
\begin{example}
Combining the two previous examples, the exponential map $(\R, + ) \rightarrow (\R^\times, \cdot)$ induces a homomorphism of the associated pointed Lie semiheaps. Explicitly, 
$$\rme^{\mathrm{x} -  \mathrm{y} + \mathrm{z}} = \rme^{\mathrm{x}} \rme^{ -  \mathrm{y}} \rme^{ \mathrm{z}} = \rme^{\mathrm{x}} (\rme^{  \mathrm{y}})^{-1} \rme^{ \mathrm{z}}\,. $$
\end{example}
We note that the target category of the heapification functor can be restricted to the category of pointed Lie heaps,  which we denote as $\catname{LieHp}_*$. We then have a functor going in the other direction, which is again, well known in the context of heaps and groups.
\begin{definition}\label{def:GroupFun}
The \emph{groupification functor} is the functor
$$\mathcal{G} : \catname{LieHp}_* \longrightarrow \catname{LieGrp}\,,$$
which acts on objects as
$$(H, \mu, e) \longmapsto (H, m, i)\,,$$
where $m(x_1, x_2) := [x_1, e, x_2]$, and $i(x) = x^{-1} := [e,x,e]$. Furthermore, the group identity element is $e \in H$.  On morphisms
$\phi : (H, \mu, e) \rightarrow (H', \mu', e')$ the groupification functor acts as $\phi^{\mathcal{G}} := \phi$.
\end{definition}
Note that as the ternary product is by definition smooth, the corresponding group structure is also smooth and the target category of the groupification functor is the category of Lie groups.
\begin{proposition}
The groupification functor is fully faithful.
\end{proposition}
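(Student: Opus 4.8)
The plan is to mirror the proof of Proposition~\ref{prop:HeapFunFulFaith} (fully faithfulness of $\mathcal{H}$) and exploit the fact that $\mathcal{H}$ and $\mathcal{G}$ are essentially mutually inverse on the relevant subcategories. First I would observe that faithfulness is immediate: since $\mathcal{G}$ acts as the identity on underlying maps ($\phi^{\mathcal{G}} := \phi$), distinct pointed heap homomorphisms are sent to distinct group homomorphisms, so $\mathcal{G}$ is injective on hom-sets. The content is fullness. So fix pointed Lie heaps $(H,\mu,e)$ and $(H',\mu',e')$, let $G = \mathcal{G}(H,\mu,e)$ and $G' = \mathcal{G}(H',\mu',e')$, and let $\psi : G \to G'$ be an arbitrary Lie group homomorphism. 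I must show $\psi$ is a pointed Lie heap homomorphism from $(H,\mu,e)$ to $(H',\mu',e')$, i.e.\ that $\psi$ is smooth (it is, being a Lie group homomorphism, and the underlying manifolds of $H$ and $G$ coincide), that $\psi(e)=e'$ (true since group homomorphisms preserve the identity, and the identity of $G$ is precisely the distinguished point $e$), and that $\psi[x_1,x_2,x_3] = [\psi(x_1),\psi(x_2),\psi(x_3)]'$ for all $x_i \in H$.

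The key computation is to re-express the ternary product purely in terms of the group structure recovered by $\mathcal{G}$. Using the heap axioms for a (pointed) Lie heap, one shows $[x_1,x_2,x_3] = m(x_1, m(i(x_2), x_3)) = x_1 x_2^{-1} x_3$ where $m$ and $i$ are as in Definition~\ref{def:GroupFun}. Concretely: by definition $m(a,b) = [a,e,b]$ and $i(x) = [e,x,e]$; then using para-associativity together with the biunitary identities $[y,x,x]=y=[x,x,y]$, I would verify the chain
\begin{equation*}
m\big(x_1, m(i(x_2),x_3)\big) = \big[x_1, e, [[e,x_2,e], e, x_3]\big] = [x_1, x_2, x_3]\, ,
\end{equation*}
the middle step being a bracket manipulation: $[[e,x_2,e],e,x_3] = [e,x_2,[e,e,x_3]] = [e,x_2,x_3]$ by para-associativity and biunitarity, and then $[x_1,e,[e,x_2,x_3]] = [x_1,[x_2,e,e],x_3] = [x_1,x_2,x_3]$ again by para-associativity and biunitarity. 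Once $[x_1,x_2,x_3] = x_1 x_2^{-1} x_3$ in $G$ is established (and likewise in $G'$), the homomorphism property of $\psi$ as a heap map follows immediately from the homomorphism property as a group map: $\psi[x_1,x_2,x_3] = \psi(x_1 x_2^{-1} x_3) = \psi(x_1)\psi(x_2)^{-1}\psi(x_3) = [\psi(x_1),\psi(x_2),\psi(x_3)]'$.

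Finally I would note that $\psi$ lands in the category of \emph{Lie} groups and is recovered in the category of \emph{pointed Lie heaps} as a smooth map of manifolds, so no regularity is lost in either direction; combined with injectivity this gives that $\mathcal{G}$ is a bijection on each hom-set, hence fully faithful. The main (and only real) obstacle is the bracket bookkeeping in the step $m(x_1,m(i(x_2),x_3)) = [x_1,x_2,x_3]$ — one must apply para-associativity in the correct orientation and keep track of the $s_{13}$-type reversal in the middle argument — but this is a short finite manipulation using only the heap axioms already recorded in Definition~\ref{def:SemiHeap} and the biunitary condition, with no analytic input. (One could alternatively package the whole argument as: $\mathcal{G}$ is essentially the inverse of the fully faithful functor $\mathcal{H}$ restricted to $\catname{LieHp}_*$, but I would prefer to give the direct computation for self-containedness.)
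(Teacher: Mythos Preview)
Your proposal is correct and follows essentially the same approach as the paper: faithfulness is immediate since $\mathcal{G}$ acts as the identity on underlying maps, and fullness comes from the identity $[x_1,x_2,x_3] = x_1 x_2^{-1} x_3$ together with the group homomorphism property. The paper states this identity as a ``direct observation'' without justification, whereas you supply the explicit bracket manipulation using para-associativity and biunitarity; you also make explicit the preservation of the distinguished point and smoothness, which the paper leaves implicit.
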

\begin{proof}
As the groupification functor does not change a given pointed Lie heap homomorphism, it is just considered as being in a different category, it is obviously faithful. Fullness following using the direct observation that $x y^{-1}z = [x,y,z]$. Then applying an arbitrary Lie group homomorphism $\bar{\phi}:  \mathcal{G}(H) \rightarrow \mathcal{G}(H')$ shows that 
$$ \bar{\phi}([x,y,z]_H) = \bar{\phi}(x y^{-1}z) = \bar{\phi}(x) \bar{\phi}(y)^{-1}\bar{\phi}(z) = [\bar{\phi}(x),\bar{\phi}(y),\bar{\phi}(z)]_{H'}\,,$$
which established the desired result.
\end{proof}
Recall that two categories $\catname{C}$ and $\catname{D}$ are isomorphic if there exists two functors $\mathrm{F} : \catname{C} \rightarrow \catname{D}$ and $\mathrm{G} : \catname{D} \rightarrow \catname{C}$ such that $\mathrm{F}\mathrm{G} = \Id_D$ and $\mathrm{G}\mathrm{F} = \Id_C$. That is, there is a one-to-one correspondence between objects and morphisms. As it can easily and directly be shown that the on objects the heapification and groupification functors are mutual inverses - a fact well known in the algebraic setting - we have the following theorem.
\begin{theorem}\label{trm:GrpHeapIso}
There is an isomorphisms of categories between $\catname{LieGrp}$ and $\catname{LieHp}_*$.
\end{theorem}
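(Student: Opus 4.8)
The plan is to verify the two functor-composition identities $\mathcal{G} \circ \mathcal{H} = \Id$ on $\catname{LieGrp}$ and $\mathcal{H} \circ \mathcal{G} = \Id$ on $\catname{LieHp}_*$, both on objects and on morphisms. Since Propositions~\ref{prop:HeapFunFulFaith} and the analogous statement for groupification tell us both functors are fully faithful, and since on morphisms both functors are the identity (i.e. $\psi^{\mathcal H} = \psi$ and $\phi^{\mathcal G} = \phi$), the morphism-level identities are immediate: $(\psi^{\mathcal H})^{\mathcal G} = \psi$ and $(\phi^{\mathcal G})^{\mathcal H} = \phi$. So the entire content of the proof is the object-level check, which is a short computation.

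First I would take a Lie group $(G, \mathrm{m}, \mathrm{i})$ and compute $\mathcal{G}(\mathcal{H}(G))$. By Definition~\ref{def:HeapFunc}, $\mathcal{H}(G) = (G, \mu, e)$ with $\mu(g_1,g_2,g_3) = g_1 g_2^{-1} g_3$. Applying $\mathcal{G}$ (Definition~\ref{def:GroupFun}), the recovered multiplication is $m'(x_1,x_2) = [x_1, e, x_2] = x_1 e^{-1} x_2 = x_1 x_2$, and the recovered inversion is $i'(x) = [e, x, e] = e x^{-1} e = x^{-1}$, with distinguished identity $e$. Hence $\mathcal{G}(\mathcal{H}(G)) = (G, \mathrm{m}, \mathrm{i})$ exactly, not merely up to isomorphism.

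Next I would take a pointed Lie heap $(H, \mu, e)$ and compute $\mathcal{H}(\mathcal{G}(H))$. Applying $\mathcal{G}$ gives the group $(H, m, i)$ with $m(x_1,x_2) = [x_1,e,x_2]$ and $x^{-1} = [e,x,e]$; then applying $\mathcal{H}$ produces the ternary operation $\mu'(x_1,x_2,x_3) = x_1 x_2^{-1} x_3 = [x_1, e, [e, x_2, e], e, x_3]$... more carefully, $x_1 x_2^{-1} x_3 = m(m(x_1, i(x_2)), x_3) = [[x_1, e, [e,x_2,e]], e, x_3]$. Using para-associativity and the biunitary (heap) axioms $[y,x,x] = y = [x,x,y]$ repeatedly, this must collapse back to $[x_1, x_2, x_3]$; the distinguished point $e$ is preserved by construction. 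This is the one genuine computation in the proof, and it is the step I expect to be the main (albeit minor) obstacle: one must be careful to use only the heap axioms — para-associativity together with biunitarity of \emph{every} element — since a general semiheap would not admit this reduction. Concretely, $[[x_1,e,[e,x_2,e]],e,x_3] = [x_1, e, [[e,x_2,e],e,x_3]]$ by para-associativity, and $[[e,x_2,e],e,x_3] = [e, x_2, [e,e,x_3]] = [e, x_2, x_3]$ using biunitarity on the inner bracket, so the whole expression becomes $[x_1, e, [e, x_2, x_3]]$; a further application of para-associativity and biunitarity, $[x_1, e, [e,x_2,x_3]] = [[x_1,e,e], x_2, x_3] = [x_1, x_2, x_3]$, finishes it. Having checked both composites are literally the identity functor on objects and on morphisms, the isomorphism of categories follows, invoking the definition of isomorphic categories recalled just before the theorem.
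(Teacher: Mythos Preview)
Your proposal is correct and follows exactly the approach the paper takes: verify that $\mathcal{G}\circ\mathcal{H}$ and $\mathcal{H}\circ\mathcal{G}$ are the identity functors, with the morphism-level check trivial since both functors act as the identity on maps. The paper merely asserts the object-level mutual-inverse property as ``well known in the algebraic setting'' without writing out the heap-axiom computation you supply, so your version is in fact more detailed than the original.
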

\begin{proposition}\label{prop:InducedSemiHeap}
Let $M$ be a smooth manifold, $(S, \mu)$ be a Lie semiheap, and $\phi, \psi :  M \rightarrow S$ be diffeomorphisms. Then $M$ inherits two Lie semiheap structures, $\mu_\phi$ and $\mu_\psi$, that are canonically isomorphic. 
\end{proposition}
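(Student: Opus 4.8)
The plan is to use the diffeomorphisms $\phi$ and $\psi$ to transport the ternary product from $S$ to $M$ by conjugation, and then exhibit one of the two maps $\phi,\psi$ itself as the isomorphism between the two resulting structures. Concretely, first I would define $\mu_\phi : M^{(3)} \to M$ by $\mu_\phi := \phi^{-1} \circ \mu \circ \phi^{(3)}$, that is, $\mu_\phi(p_1,p_2,p_3) = \phi^{-1}\big(\mu(\phi(p_1),\phi(p_2),\phi(p_3))\big)$, and likewise $\mu_\psi := \psi^{-1}\circ\mu\circ\psi^{(3)}$. Since $\phi$, $\phi^{-1}$, $\mu$ and the product map $\phi^{(3)}$ are all smooth, $\mu_\phi$ is a smooth map $M^{(3)}\to M$; the same applies to $\mu_\psi$.

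Next I would check para-associativity of $\mu_\phi$. This is a routine transport-of-structure argument: the commuting diagram of Definition \ref{def:SemiHeap} for $(S,\mu)$ pulls back along $\phi$ because the canonical $\sS_3$-action (in particular $s_{13}$) commutes with applying $\phi$ componentwise, i.e. $s_{13}\circ\phi^{(3)} = \phi^{(3)}\circ s_{13}$, and because inserting $\phi^{-1}\circ\phi = \Id$ between consecutive applications of $\mu$ changes nothing. So each of the three composites defining para-associativity for $\mu_\phi$ equals $\phi^{-1}$ applied to the corresponding composite for $\mu$, and these agree because $(S,\mu)$ is a semiheap. Hence $(M,\mu_\phi)$ is a semiheap, and being built from smooth maps on a finite-dimensional Hausdorff second-countable manifold, it is a Lie semiheap; the same argument gives $(M,\mu_\psi)$.

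Finally, to show $(M,\mu_\phi)$ and $(M,\mu_\psi)$ are canonically isomorphic, I would set $\chi := \psi^{-1}\circ\phi : M \to M$, a diffeomorphism. I claim $\chi : (M,\mu_\phi) \to (M,\mu_\psi)$ is a Lie semiheap homomorphism, indeed an isomorphism. Unwinding: $\mu_\psi\circ\chi^{(3)} = \psi^{-1}\circ\mu\circ\psi^{(3)}\circ(\psi^{-1}\circ\phi)^{(3)} = \psi^{-1}\circ\mu\circ\phi^{(3)} = (\psi^{-1}\circ\phi)\circ\phi^{-1}\circ\mu\circ\phi^{(3)} = \chi\circ\mu_\phi$, which is exactly the homomorphism condition $\phi[\text{\dots}]$ transported to this setting. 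Since $\chi$ is a diffeomorphism with smooth inverse $\phi^{-1}\circ\psi$, which is a homomorphism by the symmetric computation, $\chi$ is an isomorphism of Lie semiheaps. The word ``canonically'' is justified by noting $\chi$ is determined by the given data $\phi,\psi$ with no further choices.

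There is no serious obstacle here; the statement is pure transport of structure. The only mild point of care is bookkeeping with the $s_{13}$ twist in the middle term of the para-associative law — one must verify that $\phi^{(3)}$ intertwines $s_{13}$ with itself — but this is immediate since $s_{13}$ merely permutes the three tensor factors and $\phi^{(3)}$ acts diagonally. One might also remark, though it is not needed for the statement, that if $S$ is abelian then so are both induced structures, and that the construction is functorial in the obvious sense.
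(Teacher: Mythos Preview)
Your proposal is correct and follows essentially the same route as the paper: define $\mu_\phi$ and $\mu_\psi$ by conjugation, verify para-associativity by inserting $\phi^{-1}\phi=\Id$ and using para-associativity on $S$, and then exhibit $\chi=\psi^{-1}\circ\phi$ as the canonical isomorphism via the computation $\mu_\psi\circ\chi^{(3)}=\chi\circ\mu_\phi$. The only stylistic difference is that the paper writes the steps out elementwise while you phrase them compositionally, but the argument is the same.
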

\begin{proof}
The ternary structures inherited are the obvious ones, i.e., we set 
\begin{align*}
[m_1,m_2,m_3]_\phi := \phi^{-1} [\phi(m_1), \phi(m_2), \phi(m_3)]\, , &&[m_1,m_2,m_3]_\psi :=  \psi^{-1}[\psi(m_1), \psi(m_2), \psi(m_3)] \, .
\end{align*}
We first need to show that these ternary structures are para-associative. We chose to study the structure associated with $\phi$, but, of course, the case of $\psi$ follows. Using the para-associativity of the ternary multiplication on $S$ and the fact that $\phi \phi^{-1} = \Id$, we observe that
 \begin{subequations}
\begin{align}
\nonumber [[m_1, m_2, m_3]_\phi, m_4, m_5]_\phi & = \phi^{-1}[\phi \phi^{-1}[\phi(m_1), \phi(m_2), \phi(m_3)], \phi(m_4), \phi(m_5)]\\
&= \phi^{-1}[\phi(m_1), \phi \phi^{-1}[\phi(m_4), \phi(m_3), \phi(m_2), \phi(m_5)]]\label{prf:a}\\
& = \phi^{-1}[\phi(m_1), \phi(m_2), \phi \phi^{-1}[\phi(m_3), \phi(m_4), \phi(m_5)]]\,. \label{prf:b}
\end{align}
\end{subequations}
We then note that \eqref{prf:a} is identical to $[m_1, [m_4,m_3, m_4]_\phi, m_5]_\phi$, and that \eqref{prf:b} is identical to $[m_1, m_2, [m_3, m_4, m_5]_\phi]_\phi$. Thus, we have para-associativity of the induced ternary operations.  Note that $\phi$ is a Lie semiheap homomorphism from the induced structure to the one on $S$.\par 
Next we need to show that $\psi^{-1}\circ \phi : M \rightarrow M$ is a Lie semiheap homomorphism between the two induced structures. Clearly, as a composition on diffeomorphisms is itself a diffeomorphism, we will have an isomorphism of Lie semiheaps.  Directly,
\begin{align*}
\psi^{-1}\phi[m_1, m_2, m_3]_\phi & = \psi^{-1}\phi \phi^{-1}[\phi(m_1), \phi(m_2), \phi(m_3)]\\
& = \psi^{-1}[\phi(m_1), \phi(m_2), \phi(m_3)]\\
& = \psi^{-1}[\psi \psi^{-1}\phi(m_1), \psi \psi^{-1}\phi(m_2),\psi \psi^{-1} \phi(m_3)]\\
& = [\psi^{-1}\phi(m_1), \psi^{-1}\phi(m_2), \psi^{-1}\phi(m_3)]_\psi\,,
\end{align*}
as required. A similar statement holds for $\phi^{-1}\circ \psi$.
\end{proof}
We can modify Proposition \ref{prop:InducedSemiHeap} by considering a pointed Lie semiheap $(S, \mu, \mathrm{pt})$. If we set $\phi^{-1}( \mathrm{pt})=m$ and $\psi^{-1}( \mathrm{pt})=n$, then we have a diffeomorphism of pointed manifolds $\psi^{-1}\circ \phi :  (M, m)\rightarrow (M, n)$.  The following proposition is thus evident.
\begin{proposition}\label{prop:InducedPntSemiHeap}
Let $M$ be a smooth manifold, $(S, \mu, \textrm{pt})$ be a pointed Lie semiheap, and $\phi, \psi :  M \rightarrow S$ be diffeomorphisms such that $\phi^{-1}( \mathrm{pt})=m$ and $\psi^{-1}( \mathrm{pt})=n$. Then the two inherited pointed Lie semiheaps $(M, \mu_\phi, m)$ and $(M, \mu_\psi, n)$ are canonically isomorphic.  
\end{proposition}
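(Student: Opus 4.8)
The plan is to reduce Proposition \ref{prop:InducedPntSemiHeap} to the already-established Proposition \ref{prop:InducedSemiHeap} by simply tracking the distinguished point through the same isomorphism. Concretely, Proposition \ref{prop:InducedSemiHeap} already furnishes the Lie semiheap isomorphism $\psi^{-1}\circ\phi : (M,\mu_\phi)\to(M,\mu_\psi)$; the only new content here is that this map sends the distinguished point $m$ of the first structure to the distinguished point $n$ of the second.

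First I would recall that, by the modification explained in the paragraph preceding the statement, the distinguished point of $(M,\mu_\phi)$ is defined to be $m := \phi^{-1}(\mathrm{pt})$ and that of $(M,\mu_\psi)$ is $n := \psi^{-1}(\mathrm{pt})$. Then I would compute directly
\begin{align*}
(\psi^{-1}\circ\phi)(m) = \psi^{-1}\big(\phi(\phi^{-1}(\mathrm{pt}))\big) = \psi^{-1}(\mathrm{pt}) = n\,,
\end{align*}
so that $\psi^{-1}\circ\phi$ is a morphism of \emph{pointed} Lie semiheaps. Since Proposition \ref{prop:InducedSemiHeap} already shows it is a Lie semiheap homomorphism and a diffeomorphism (a composition of diffeomorphisms), it is an isomorphism in $\catname{LieShp}_*$. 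The symmetric statement for $\phi^{-1}\circ\psi$ is obtained by exchanging the roles of $\phi$ and $\psi$, which also exhibits the inverse morphism; hence the isomorphism is canonical in the same sense as in Proposition \ref{prop:InducedSemiHeap}.

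Honestly, there is no real obstacle here: everything nontrivial — para-associativity of $\mu_\phi$ and $\mu_\psi$, and the homomorphism property of $\psi^{-1}\circ\phi$ — has been carried out in the proof of Proposition \ref{prop:InducedSemiHeap}, and the present proposition only adds a one-line point-chasing verification. The single point to be mildly careful about is bookkeeping: making sure the distinguished points are attached to the correct induced structure (the point $m$ goes with $\mu_\phi$, the point $n$ with $\mu_\psi$) so that the composite $\psi^{-1}\circ\phi$, rather than its inverse, is the one realising $m\mapsto n$. With that noted, the proof is complete.
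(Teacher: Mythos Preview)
Your proposal is correct and matches the paper's approach exactly: the paper does not give a separate proof but simply notes, in the paragraph immediately preceding the statement, that $\psi^{-1}\circ\phi:(M,m)\to(M,n)$ is a diffeomorphism of pointed manifolds and then declares the proposition ``evident''. Your write-up makes this explicit by invoking Proposition~\ref{prop:InducedSemiHeap} and checking $(\psi^{-1}\circ\phi)(m)=n$, which is precisely the intended argument.
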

Let $(S,\mu)$ and $(S', \mu')$ be Lie semiheaps. Then $S\times S'$ is, of course, a smooth manifold. A ternary product on the Cartesian product can be defined as
$$[(x_1, y_1), (x_2, y_2), (x_3, y_3)] := \big ( [x_1, x_2, x_3], [y_1, y_2, y_3] \big)$$
 Clearly, the ternary product is para-associative and smooth. Thus, the Cartesian product of Lie semiheaps is again a Lie semiheap. It remains to argue that the Cartesian product is a categorical product. 
 \begin{proposition}
In the category of Lie semiheaps, $\catname{LieSHp}$, the Cartesian product is a categorical product.
\end{proposition}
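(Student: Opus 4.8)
The plan is to verify the universal property of the categorical product directly from the definitions. Let $\pi_1 : S \times S' \to S$ and $\pi_2 : S \times S' \to S'$ denote the two canonical projections. First I would check that $\pi_1$ and $\pi_2$ are Lie semiheap homomorphisms: each is smooth (a projection of smooth manifolds), and each intertwines the componentwise ternary product with the ternary products on $S$ and $S'$, which is immediate from the definition $[(x_1,y_1),(x_2,y_2),(x_3,y_3)] := \big([x_1,x_2,x_3],[y_1,y_2,y_3]\big)$, since $\pi_1$ just reads off the first slot and $\pi_2$ the second.

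Next, given an arbitrary Lie semiheap $(T, \mu_T)$ together with Lie semiheap homomorphisms $f : T \to S$ and $g : T \to S'$, I would define $\langle f, g \rangle : T \to S \times S'$ by $t \mapsto (f(t), g(t))$. This is smooth because $f$ and $g$ are smooth and $S \times S'$ carries the product smooth structure. It is a semiheap homomorphism because, for any $t_1, t_2, t_3 \in T$,
\begin{align*}
\langle f, g\rangle \big([t_1,t_2,t_3]_T\big) &= \big(f[t_1,t_2,t_3]_T,\, g[t_1,t_2,t_3]_T\big)\\
&= \big([f(t_1),f(t_2),f(t_3)],\, [g(t_1),g(t_2),g(t_3)]\big)\\
&= \big[(f(t_1),g(t_1)),(f(t_2),g(t_2)),(f(t_3),g(t_3))\big],
\end{align*}
using that $f$ and $g$ are homomorphisms and then the definition of the product ternary operation. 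Clearly $\pi_1 \circ \langle f,g\rangle = f$ and $\pi_2 \circ \langle f,g\rangle = g$.

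Finally, I would establish uniqueness: if $h : T \to S \times S'$ is any Lie semiheap homomorphism with $\pi_1 \circ h = f$ and $\pi_2 \circ h = g$, then for each $t$ the pair $h(t) \in S \times S'$ has first component $f(t)$ and second component $g(t)$, so $h(t) = (f(t), g(t)) = \langle f, g\rangle(t)$, whence $h = \langle f, g\rangle$. This verifies the universal property and shows the Cartesian product is the categorical product.

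I do not anticipate a genuine obstacle here; the statement is a routine unwinding of definitions, exactly as for groups or for plain semiheaps. The only minor point worth a word is that all the maps constructed remain \emph{smooth}, not merely set-theoretic semiheap homomorphisms — but this is automatic from the universal property of the product in the category of smooth manifolds, so the categorical product of Lie semiheaps is simply the product of the underlying manifolds equipped with the componentwise ternary operation.
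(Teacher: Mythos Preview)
Your proof is correct and follows essentially the same direct verification of the universal property as the paper, defining the unique map $t \mapsto (f(t), g(t))$ and checking it is a smooth semiheap homomorphism making the relevant triangle commute. If anything, your version is slightly more thorough, since you spell out the homomorphism calculation and the uniqueness argument explicitly, whereas the paper leaves these as ``easy to check.''
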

\begin{proof}
The preceding discussion shows that the Cartesian product of two Lie semiheaps is again a semiheap. We only have to demonstrate the universal property. Let $(S,\mu)$ and $(S', \mu')$ be Lie semiheaps. We then define the projection maps (which are clearly homomorphisms of Lie semiheaps)
\begin{align*}
\pi_S : S\times S' \rightarrow S\,, && \pi_{S'} : S\times S' \rightarrow S'\,.
\end{align*}
Let $(T, \nu)$ be any Lie semiheap and consider the pair of Lie semiheap homomorphisms  
\begin{align*}
\phi_S : T \rightarrow S\,, && \phi_{S'} : T \rightarrow S'\,.
\end{align*}
The universal property is that given the above homomorphisms, there exists a unique Lie semiheap homomorphism $\phi :  T \rightarrow S\times S' $, such that the following diagram is commutative:
 \begin{center}
\leavevmode
\begin{xy}
(0,20)*+{T}="a"; (30,20)*+{S}="b";%
(0,0)*+{S'}="c"; (30,0)*+{S \times S'}="d";%
{\ar "a";"b"}?*!/_3mm/{\phi_S };%
{\ar "a";"c"}?*!/^3mm/{\phi_{S'}};{\ar@{-->} "a";"d"}?*!/^4mm/{\phi};%
{\ar "d";"b"}?*!/^3mm/{\pi_S};{\ar "d";"c"}?*!/_5mm/{\pi_{S'}};%
\end{xy}
\end{center} 
We claim that the required map is $\phi(-) := \big(\phi_S(-), \phi_{S'}(-) \big)$.  Clearly this map is smooth and renders the above diagram commutative. It is easy to check that this map is a Lie semiheap homomorphism.
\end{proof}
Recall that the tangent functor  (see \cite[Chapter I]{Kolar:1993}) is a functor from the category of smooth manifolds to the category of smooth manifolds that 
\begin{enumerate}
\item on objects, sends $M$ to its tangent bundle $\sT M$, and 
\item on morphisms, $\psi:  M \rightarrow N$ gets sent to $\sT \psi :  \sT M \rightarrow \sT N$.
\end{enumerate}
A fundamental property of the tangent functor is  that it preserves products, i.e.,  $\sT (M \times N) \cong \sT M \times \sT N$, and given $\psi : M \rightarrow M'$ and $\chi : N \rightarrow N'$, $\sT(\psi \times \chi) \cong \sT \psi \times \sT \chi $.
\begin{proposition}
Let $(S, \mu)$ be a Lie semiheap. Then $(\sT S, \sT \mu)$ is also a Lie semiheap.
\end{proposition}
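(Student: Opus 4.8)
The plan is to verify that $(\sT S, \sT\mu)$ satisfies the defining diagram of Definition \ref{def:SemiHeap}, relying entirely on the functoriality of $\sT$ and the fact that $\sT$ preserves products. The key observation is that every structural ingredient of a semiheap --- the product $\mu$, the identity maps $\Id$, the swap $s_{13}$, and the various product and composition operations --- is built from smooth maps and Cartesian products, so applying the tangent functor to the whole para-associativity diagram produces the corresponding diagram for $\sT S$.

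First I would note that $\sT\mu : \sT(S^{(3)}) \to \sT S$ together with the canonical diffeomorphism $\sT(S^{(3)}) \cong (\sT S)^{(3)}$ (coming from the product-preservation property applied twice) gives a ternary product on the manifold $\sT S$; smoothness is automatic since $\sT\mu$ is smooth. Next I would identify the tangent-functor images of the auxiliary maps: $\sT(\Id_S) = \Id_{\sT S}$, and $\sT(s_{13}) : \sT(S^{(3)}) \to \sT(S^{(3)})$ corresponds, under the identification $\sT(S^{(3)}) \cong (\sT S)^{(3)}$, exactly to the swap map $s_{13}$ on $(\sT S)^{(3)}$ --- this is because $s_{13}$ is itself assembled from projections and the identity, all of which $\sT$ respects. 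Similarly $\sT$ of a composition is the composition of the $\sT$'s, and $\sT$ of a map of the form $f \times g$ is (up to the canonical identification) $\sT f \times \sT g$.

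Then I would apply $\sT$ to the commuting square in Definition \ref{def:SemiHeap}. Functoriality sends a commutative diagram to a commutative diagram, and using the identifications above, the image diagram is precisely the para-associativity diagram for $(\sT S, \sT\mu)$: the two top equalities become equalities of $(\sT S)^{(n)}$-factorizations, the three vertical legs become $\Id^{(2)}_{\sT S}\times \sT\mu$, $\Id_{\sT S}\times(\sT\mu \circ s_{13})\times\Id_{\sT S}$, and $\sT\mu\times\Id^{(2)}_{\sT S}$, and the three maps down to $\sT S$ become $\sT\mu$. Hence $(\sT S, \sT\mu)$ is a semiheap object in the category of smooth manifolds, i.e.\ a Lie semiheap.

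The main obstacle --- really the only place requiring care --- is bookkeeping the canonical isomorphisms $\sT(S^{(3)}) \cong (\sT S)^{(3)}$ and checking that, under them, $\sT(s_{13})$ genuinely becomes the coordinate swap on $(\sT S)^{(3)}$ and that $\sT$ of the three ``leg'' maps become the claimed products; once this naturality is confirmed there is no computation left. One should also remark, as the paper does elsewhere, that if $(S,\mu)$ is moreover a Lie heap then $(\sT S, \sT\mu)$ is a Lie heap --- this follows by applying $\sT$ to the biunitarity identities $y = [x,x,y] = [y,x,x]$ viewed as equalities of maps $S^{(2)} \to S$ --- but for the stated proposition only the para-associative diagram is needed.
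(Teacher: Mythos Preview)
Your proposal is correct and follows essentially the same approach as the paper: write para-associativity as an equality of composites of smooth maps, apply the tangent functor, and use that $\sT$ preserves compositions, products, identities, and hence the swap $s_{13}$. The paper simply does this in two displayed lines with a ``minor abuse of notation'' in place of your more careful bookkeeping of the canonical isomorphisms $\sT(S^{(3)})\cong(\sT S)^{(3)}$, but the content is the same.
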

\begin{proof}
We need to check that $\sT \mu  : (\sT S)^{(3)} \rightarrow  \sT S$ is para-associative. We start with the para-associative property of $(S, \mu)$ 
$$\mu \circ(\Id^{(2)}\times \mu) = \mu \circ( \Id\times(\mu \circ s_{13})\times \Id) = \mu \circ  (\mu \times \Id^{(2)})\,,$$
and apply the tangent functor.  Using the properties of the tangent functor, and via minor abuse of notation, we observe that
$$\sT \mu \circ(\Id^{(2)}\times \sT \mu) = \sT \mu \circ( \Id\times(\sT\mu \circ s_{13})\times \Id) = \sT \mu \circ  (\sT \mu \times \Id^{(2)})\,,$$
thus we have the para-associative property. 
\end{proof}
\begin{definition}
Let $(S, \mu)$ be a Lie semiheap. The Lie semiheap $(\sT S, \sT \mu)$ will be referred to as the \emph{tangent Lie semiheap} of $(S, \mu)$.
\end{definition}
If we have a pointed Lie semiheap $(S, \mu, \mathrm{pt})$, then the tangent functor produces the pointed Lie semiheap $(\sT S, \sT \mu, \mathrm{0}_{\textrm{pt}})$.
\subsection{Left-invariant Vector Fields}
 The notion of a left-invariant vector field directly generalises to the setting of Lie semiheaps. 
\begin{definition}
Let $(S, \mu)$ be a Lie semiheap. Then a vector field $X \in \Vect(S)$ is said to be a \emph{left-invariant vector field} if
$$l^* \circ X =  X \circ l^*\,,$$
for all $l \in L(S)$. The vector space of all left-invariant vector fields is denoted as $\Vect_L(S)$.
\end{definition}
\begin{proposition}
The space of left-invariant vector fields $\Vect_L(S)$ on a Lie semiheap $(S, \mu)$ is a Lie subalgebra of the Lie algebra of vector fields. 
\end{proposition}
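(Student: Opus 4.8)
The plan is to verify the two closure conditions that make a linear subspace a Lie subalgebra: that $\Vect_L(S)$ is closed under linear combinations, and that it is closed under the Lie bracket of vector fields. The first is immediate because the defining condition $l^* \circ X = X \circ l^*$ for all $l \in L(S)$ is linear in $X$: if $X, Y \in \Vect_L(S)$ and $a, b \in \R$, then $l^* \circ (aX + bY) = a(l^* \circ X) + b(l^* \circ Y) = a(X \circ l^*) + b(Y \circ l^*) = (aX + bY)\circ l^*$, so $aX+bY$ is again left-invariant. Here I am reading the left-invariance condition as an identity of operators on $C^\infty(S)$, where $l^*$ denotes pullback of functions along the left translation $l = L_{x_1 x_2}$ and $X$ acts as a derivation; one should note at the outset that, unlike the Lie group case, the left translations $L_{x_1 x_2}$ need not be diffeomorphisms, so this operator-level formulation (rather than one phrased via $\sT l$) is the appropriate one.

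For the bracket, I would argue as follows. Suppose $X, Y \in \Vect_L(S)$, and fix $l \in L(S)$. The key algebraic fact is that if two derivations $D_1, D_2$ of $C^\infty(S)$ each commute with an algebra endomorphism $l^*$ in the sense $l^* \circ D_i = D_i \circ l^*$, then their commutator does too: $l^* \circ (D_1 D_2 - D_2 D_1) = (D_1 D_2 - D_2 D_1)\circ l^*$, which is a one-line associativity computation, $l^* D_1 D_2 = D_1 l^* D_2 = D_1 D_2 l^*$ and similarly for the other term, then subtract. Applying this with $D_1 = X$, $D_2 = Y$ gives $l^* \circ [X,Y] = [X,Y]\circ l^*$ for every $l \in L(S)$, hence $[X,Y] \in \Vect_L(S)$. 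Since the bracket of vector fields is $\R$-bilinear, antisymmetric, and satisfies the Jacobi identity on all of $\Vect(S)$, these properties are inherited on the subspace, so $\Vect_L(S)$ is a Lie subalgebra of $\Vect(S)$.

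I do not anticipate a genuine obstacle here; the proposition is essentially formal. The one point that requires a little care — and is worth a sentence in the write-up — is the passage between the coordinate-free statement ``$X$ is $l$-related to itself'' and the operator identity on functions, because $L_{x_1x_2}$ is only a smooth map, not a diffeomorphism, so ``$l$-relatedness'' here must be taken to mean precisely $l^*\circ X = X\circ l^*$ as maps $C^\infty(S)\to C^\infty(S)$, and one should check that for smooth (not necessarily invertible) maps the commutator of two $l$-related vector fields is still $l$-related to its commutator. That last fact is standard: if $X$ is $f$-related to $X'$ and $Y$ is $f$-related to $Y'$ for a smooth map $f$, then $[X,Y]$ is $f$-related to $[X',Y']$; specializing to $f = l$, $X'=X$, $Y'=Y$ finishes the argument.
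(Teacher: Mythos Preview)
Your proof is correct and follows essentially the same approach as the paper: the paper's argument is precisely the one-line operator computation $l^*\circ [X,Y] = l^*\circ X\circ Y - l^*\circ Y\circ X = X\circ Y\circ l^* - Y\circ X\circ l^* = [X,Y]\circ l^*$, while you additionally spell out closure under linear combinations and the remark about $l$-relatedness for non-invertible smooth maps, which the paper leaves implicit.
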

\begin{proof} 
We only need to show that the space is closed under the standard commutator bracket. Explicitly, assuming $X,Y \in \Vect_L(S)$ we have
\begin{align*}
l^*\circ [X,Y] &= l^*\circ(X\circ Y - Y \circ X) = l^*\circ X\circ Y - l^*\circ Y \circ X =  X\circ Y  \circ l^* - Y \circ X \circ l^*\\
&= [X,Y]\circ l^*\,.
\end{align*}
\end{proof}
The left-invariant condition can be expressed as
$$(\rmd L_{xy})_z X_z =  X_{[x,y,z]}\,$$
for all $x,y$ and $z \in S$. For fixed $(x,y) \in S^{(2)}$, the derivative map is understood as the linear map
$$\rmd L_{xy}:  \sT _z S \longrightarrow \sT_{[x,y,z]}S\,.$$
In order to further discuss properties of left-invariant vector fields, we will specialise to a particularly nice class of pointed Lie semiheaps. 
\begin{definition}
A pointed Lie semiheap $(S, \mu, \textrm{pt}=: x_0)$ is said to be a \emph{biunital Lie semiheap} if the distinguished point is biunital, i.e., $[x,x_0,x_0] = x = [x_0, x_0, x]$ for all points $x \in S$.  \emph{Homomorphisms of biunital Lie semiheaps} are homomorphisms of pointed Lie semiheaps. The resulting category we denote as $\catname{BULieShp}$.
\end{definition}
\begin{proposition}
The set of left-translations $L(S)$ on a biunital Lie semiheap is a monoid.
\end{proposition}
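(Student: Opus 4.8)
The plan is to show that $L(S)$ is closed under composition, that composition is associative (already known from Proposition~\ref{prop:TransSemi} and the subsequent remark, which gives $L_{x_1 x_2} \circ L_{x_3 x_4} = L_{[x_1, x_2, x_3]\, x_4}$), and that there is a two-sided identity element. The natural candidate for the identity is $L_{x_0 x_0}$, where $x_0$ is the distinguished biunital point. The biunitality hypothesis says precisely that $L_{x_0 x_0}(x) = [x_0, x_0, x] = x$ for all $x \in S$, so $L_{x_0 x_0} = \mathrm{id}_S$ as a map $S \to S$; hence it is automatically a two-sided identity for composition in $L(S)$, and in particular it lies in $L(S)$.

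First I would recall the composition formula $L_{x_1 x_2} \circ L_{x_3 x_4} = L_{[x_1, x_2, x_3]\, x_4}$, which was asserted just after the proof of Proposition~\ref{prop:TransSemi}; this shows $L(S)$ is closed under composition and, together with the para-associative law, that composition is associative, so $L(S)$ is a semigroup. Then I would observe that $L_{x_0 x_0} = \mathrm{id}_S$ by the biunitality of $x_0$, and check directly that it acts as an identity: $L_{x_1 x_2} \circ L_{x_0 x_0} = L_{x_1 x_2}$ and $L_{x_0 x_0} \circ L_{x_1 x_2} = L_{x_1 x_2}$ for all $(x_1, x_2) \in S^{(2)}$. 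The first of these is immediate from $L_{x_0 x_0} = \mathrm{id}_S$; one can also verify it from the composition formula as $L_{[x_1, x_2, x_0]\, x_0}$ and note $[[x_1,x_2,x_0],x_0,z] = [x_1,x_2,[x_0,x_0,z]] = [x_1,x_2,z]$ using para-associativity and biunitality. The second follows similarly, or simply from $L_{x_0 x_0} = \mathrm{id}_S$.

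I do not expect a serious obstacle here; the only subtlety is making sure the identity element genuinely lies in $L(S)$ rather than merely being an identity for the ambient monoid of self-maps of $S$, and this is handled by the explicit presentation $L_{x_0 x_0}$. I would therefore write the proof as: (i) closure and associativity via the composition formula and para-associativity (citing Proposition~\ref{prop:TransSemi} and the remark following it); (ii) $L_{x_0 x_0} = \mathrm{id}_S$ from biunitality, hence a two-sided identity in $L(S)$; conclude that $L(S)$ is a monoid.
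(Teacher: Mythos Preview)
Your proposal is correct and follows essentially the same route as the paper: invoke Proposition~\ref{prop:TransSemi} for the semigroup structure, then exhibit $L_{x_0 x_0}$ as the identity. Your additional observation that $L_{x_0 x_0} = \mathrm{id}_S$ as a self-map of $S$ (directly from biunitality) is a slightly cleaner way to see the two-sided identity property than the paper's explicit verification via the composition formula, but the argument is the same in substance.
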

\begin{proof}
Via Proposition \ref{prop:TransSemi}, we know that $L(S)$ is a semigroup. We only need to show the existence of the identity. We claim that $L_{x_0 x_0}= \Id_{L(S)}$.  Directly,
\begin{align*}
& L_{x_0 x_0}\circ L_{yz} = L_{[x_0,x_0, y]z} = L_{xy}\,,\\ 
& L_{yz}\circ L_{x_0x_0} = L_{[y, z, x_0] x_0} = [[y,z,x_0],x_0,-]=  [y,z , [x_0,x_0,-]] = L_{yz}\,.
\end{align*}
\end{proof}
\begin{remark}
The same statement holds for right-translations on a biunital Lie semiheap.
\end{remark}
\begin{proposition}\label{prop:BiLieReach}
Let $(S, \mu, x_0)$ be a biunital Lie semiheap.  Then any point $x \in S$ can be reached from $x_0$ via a left-translation. 
\end{proposition}
\begin{proof}
Observe that for any $x \in S$
$$L_{x x_0}(x_0) = [x, x_0, x_0] =  x\,.$$
\end{proof}
\begin{remark}
The definition of a left-invariant functions is clear, i.e., $l^*f =f$ for all $l \in L(S)$. This condition implies, for biunital Lie semiheaps, that 
$$f(x) = f([x, x_0, x_0]) =  f(x_0)\,,$$
and so left-invariant functions are constants. This conclusion is not evident on general Lie semiheaps.
\end{remark}
\medskip
Note that $(\rmd L_{x x_0})_{x_0} :  \sT_{x_0}S \longrightarrow \sT_x S$ is not, in general, an isomorphism as the underlying left-translation is not a diffeomorphism. In other words, $(\rmd L_{x x_0})_{x_0}$ will, in general, have a non-trivial kernel. This complicates the question of the existence and dimension of the the space of left-invariant vector fields.
\begin{proposition}
Let $(S, \mu, x_0)$ be a biunital Lie semiheap. The map $V : S \rightarrow \sT S$ given by $x \mapsto (\rmd L_{x x_0})_{x_0}v$ for a given (non-zero) $v \in \sT_{x_0}S$, defines a smooth vector field on $S$ that is left-invariant, such that $V(x_0) = v$.
\end{proposition}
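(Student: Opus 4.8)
The plan is to establish in turn that $V$ is a well-defined section of $\sT S \to S$, that $V(x_0) = v$, that $V$ is smooth, and that $V$ satisfies the left-invariance condition $(\rmd L_{xy})_z V_z = V_{[x,y,z]}$.

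The first two points are immediate from biunitality. For each $x \in S$ the left translation $L_{x x_0}$ sends $x_0$ to $[x,x_0,x_0] = x$, so its differential at $x_0$ is a linear map $(\rmd L_{x x_0})_{x_0} : \sT_{x_0}S \to \sT_x S$; hence $V(x) \in \sT_x S$ and $V$ is a (set-theoretic) section of $\sT S \to S$. Taking $x = x_0$ and noting that $L_{x_0 x_0} = \Id_S$ since $[x_0,x_0,x] = x$ for all $x$ (cf. the proof that $L(S)$ is a monoid on a biunital Lie semiheap), we get $(\rmd L_{x_0 x_0})_{x_0} = \Id_{\sT_{x_0}S}$, whence $V(x_0) = v$.

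For smoothness I would realise $V$ as a composite of smooth maps produced by the tangent functor. Let $F : S \times S \to S$, $F(x,y) := \mu(x,x_0,y) = [x,x_0,y]$, which is smooth and satisfies $F(x,-) = L_{x x_0}$. Using that the tangent functor preserves products, view $\sT F$ as a smooth map $\sT S \times \sT S \to \sT S$, and precompose it with the smooth map $S \to \sT S \times \sT S$, $x \mapsto (0_x, v)$, whose two components are the (smooth) zero section and a constant. The resulting map is smooth, and one checks it equals $V$: evaluating the tangent map of a product map on a vector whose first slot is a zero vector returns the partial differential in the second slot, so $\sT F(0_x, v) = \rmd(F(x,-))_{x_0}(v) = (\rmd L_{x x_0})_{x_0}(v) = V(x)$, this vector lying in $\sT_{F(x,x_0)}S = \sT_{[x,x_0,x_0]}S = \sT_x S$ consistently with the first part. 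Equivalently, one may use the tangent Lie semiheap $(\sT S, \sT\mu)$ and write $V(x) = \sT\mu(0_x, 0_{x_0}, v)$, which is visibly smooth in $x$.

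For left-invariance I would combine the composition rule for left translations with the chain rule. From $L_{x_1 x_2}\circ L_{x_3 x_4} = L_{[x_1,x_2,x_3]\,x_4}$ with $(x_1,x_2,x_3,x_4) = (x,y,z,x_0)$ we obtain $L_{xy}\circ L_{z x_0} = L_{[x,y,z]\,x_0}$; since $L_{z x_0}(x_0) = [z,x_0,x_0] = z$ by biunitality, differentiating at $x_0$ and applying the chain rule gives $(\rmd L_{xy})_z \circ (\rmd L_{z x_0})_{x_0} = (\rmd L_{[x,y,z]\,x_0})_{x_0}$, and evaluating at $v$ yields precisely $(\rmd L_{xy})_z V_z = V_{[x,y,z]}$, i.e. $L_{xy}^{*}\circ V = V\circ L_{xy}^{*}$ for every $(x,y)\in S^{(2)}$. (The same conclusion also drops out of the para-associativity of $\sT\mu$ together with the fact that $\sT\mu$ restricts to $\mu$ on the zero sections.) The one step needing genuine care is the smoothness argument — specifically the identification $\sT F(0_x, v) = (\rmd L_{x x_0})_{x_0}v$ and the smoothness of $x \mapsto (0_x, v)$; once that is in place, well-definedness, the normalisation $V(x_0) = v$, and left-invariance are all formal consequences of biunitality and para-associativity.
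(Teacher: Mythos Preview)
Your proof is correct. The well-definedness, the normalisation $V(x_0)=v$, and the left-invariance argument via the composition law $L_{xy}\circ L_{z x_0}=L_{[x,y,z]\,x_0}$ together with the chain rule match the paper's treatment essentially verbatim.

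The genuine difference lies in the smoothness argument. The paper establishes smoothness by the classical criterion that $V$ is smooth iff $V(f)$ is smooth for every $f\in C^\infty(S)$: it picks a curve $\gamma$ with $\gamma(0)=x_0$, $\gamma'(0)=v$, and rewrites $V(f)(x)$ as a $t$-derivative of $f\circ\mu\circ\hat\gamma$ for a suitable smooth map $\hat\gamma:S^{(2)}\times(-\epsilon,\epsilon)\to S^{(3)}$. You instead realise $V$ directly as the composite $x\mapsto (0_x,0_{x_0},v)\mapsto \sT\mu(0_x,0_{x_0},v)$ (equivalently $\sT F(0_x,v)$ with $F(x,y)=[x,x_0,y]$), which is visibly smooth because the zero section, constants, and $\sT\mu$ are smooth. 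Your route is cleaner and, fittingly, uses the tangent Lie semiheap that the paper has already set up; the paper's curve-based argument is more hands-on but avoids invoking the product-preservation of the tangent functor. Either way the only place where actual analysis enters is this smoothness step, and both arguments handle it correctly.
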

\begin{proof}
 From Proposition \ref{prop:BiLieReach}, we know that any point in $S$ can be reached from $x_0$ via a left-translation and so the map $V$ is well defined. It is clear that $V$ can be considered as a vector field, a priori, which may not be smooth.  To see smoothness, obverse that that $V$ being smooth is equivalent to $V(f)$ being smooth for all $f \in C^\infty(S)$.  With this in mind, let $\gamma \in C^\infty(I, S)$ be a smooth curve such that $\gamma(0)= x_0$ and $\gamma'(0)= v$. Then,
$$v(f) = \left.\frac{\rmd}{\rmd t}\right|_{t=0} f \circ \gamma(t)\,, \qquad V(f) = \left.\frac{\rmd}{\rmd t}\right|_{t=0} f \circ L_{x x_0} \circ \gamma(t)\,. $$
Furthermore, consider the map $\hat \gamma : S^{(2)}\times (- \epsilon, \epsilon) \rightarrow S^{(3)}$ defined as $(x,y, t) \mapsto (x,y, \gamma(t))$. This map is clearly smooth in $t$ as we have defined it in terms of a smooth curve.  Thus,
$$f \circ L_{x x_0} \circ \gamma(t) = (f \circ \mu \circ \hat \gamma)(x,y,t)\,,$$
is smooth as it is the composition of smooth maps, and so $V(f)$ is smooth. Left-invariance follows via direct calculation, i.e.,
$$V_{[x,y,z]} = (\rmd L_{[x,y,z]x_0})_{x_0} v = (\rmd L_{xy})_z \circ (\rmd L_{z x_0})_{x_0}v = (\rmd L_{xy})_z V_z\,.$$ 
Given that $L_{x_0 x_0} =  \Id_S$, it is evident that $V(x_0) = v$.
\end{proof}
In the other direction we have the following.
\begin{proposition}
Let $(S, \mu, x_0)$ be a biunital Lie semiheap. Then the value of any left-invariant vector field at an arbitrary point $x \in S$ is determined by its value at $x_0$.
\end{proposition}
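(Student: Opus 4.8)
The plan is to reduce the value of a left-invariant vector field at an arbitrary point $x$ to its value at $x_0$ by using the reachability statement of Proposition~\ref{prop:BiLieReach}, which exhibits $x$ as $x = L_{x x_0}(x_0) = [x, x_0, x_0]$. First I would take an arbitrary $X \in \Vect_L(S)$ and apply the left-invariance condition in its infinitesimal form, namely $(\rmd L_{yz})_w X_w = X_{[y,z,w]}$, specialised to the pair $(y,z) = (x, x_0)$ and the point $w = x_0$. This immediately yields
$$X_x = X_{[x, x_0, x_0]} = (\rmd L_{x x_0})_{x_0} X_{x_0}\,,$$
so the value $X_x$ is the image of $X_{x_0}$ under the fixed linear map $(\rmd L_{x x_0})_{x_0} : \sT_{x_0}S \to \sT_x S$. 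Since this map depends only on $x$ and $x_0$ (and not on $X$), the vector $X_{x_0} \in \sT_{x_0}S$ determines $X_x$ for every $x$, which is exactly the claim.

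The only subtlety worth spelling out is that the left-invariance condition was originally phrased as $l^* \circ X = X \circ l^*$ for all $l \in L(S)$, and I would want to make explicit that this is equivalent to the pointwise pushforward identity $(\rmd L_{yz})_w X_w = X_{[y,z,w]}$ used above; this equivalence is already recorded in the paragraph preceding Proposition~\ref{prop:BiLieReach}, so it can simply be invoked. After that, the argument is a one-line substitution. I would also note, for completeness, that combined with the previous proposition (which builds a left-invariant vector field extending any prescribed $v \in \sT_{x_0}S$), this shows the evaluation-at-$x_0$ map $\Vect_L(S) \to \sT_{x_0}S$ is a linear bijection, so $\dim \Vect_L(S) = \dim S$; but that is a remark rather than part of the statement to be proved.

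The main (and really the only) obstacle is conceptual rather than technical: one must be careful that $(\rmd L_{x x_0})_{x_0}$ need not be an isomorphism — as flagged in the remark just before the preceding proposition, left-translations are not diffeomorphisms and this differential can have nontrivial kernel — but this does not affect the present statement at all, since we only need that $X_x$ is \emph{determined by} $X_{x_0}$, i.e. that the assignment factors through $X_{x_0}$, not that it is invertible. So no genuine difficulty arises; the proof is essentially immediate from Proposition~\ref{prop:BiLieReach} and the cocycle-type identity $(\rmd L_{[x,y,z]x_0})_{x_0} = (\rmd L_{xy})_z \circ (\rmd L_{z x_0})_{x_0}$ already used in the previous proof.
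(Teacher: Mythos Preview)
Your proof is correct and follows essentially the same route as the paper: invoke Proposition~\ref{prop:BiLieReach} to write $x = [x,x_0,x_0]$, then apply the pointwise form of left-invariance with $(y,z,w) = (x,x_0,x_0)$ to obtain $X_x = (\rmd L_{x x_0})_{x_0} X_{x_0}$. Your additional commentary on the non-invertibility of $(\rmd L_{x x_0})_{x_0}$ and the consequent bijection $\Vect_L(S) \to \sT_{x_0}S$ goes beyond what the paper records here, but the core argument is identical.
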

\begin{proof}
From Proposition \ref{prop:BiLieReach}, we know that any point in $S$ can be reached from $x_0$ via a left-translation. Thus,
$$(\rmd L_{x x_0})_{x_0} :  \sT_{x_0}S \longrightarrow \sT_x S\,,$$
is well defined for all $x \in S$. The left-invariance of a vector field implies that 
$$X_x = (\rmd L_{x x_0})_{x_0} X_{x_0} \,,$$
and thus $X_{x_0}$ determines the value of $X_x$ for all points $x\in S$.
\end{proof}
Consider the maps
\begin{align*}
\Phi :\, & S \times \sT_{x_0}S \longrightarrow \sT S\\
& (x,v) \longmapsto (x, (\rmd L_{x x_0})_{x_0}v)\,.
\end{align*}
Clearly, the above is a smooth bundle map (over the identity on $S$), however this is not an isomorphism in general. Thus, the tangent bundle of a biunital Lie semiheap need not, in general, be trivialisable, i.e., biunital semiheaps are not necessarily parallelisible.\par 
Let us pick a basis $\{ e_\alpha \}_{\alpha = 1, \cdots, n}$ (not necessarily the coordinate basis) of $\sT_{x_0} S$ of a biunital Lie semiheap (assuming $\dim S = n$).  We can then construct the ``not everywhere vanishing'' left-invariant vector fields viz
$$E_\alpha(x) := (\rmd L_{x x_0})_{x_0}e_\alpha\,,$$
and build other left-invariant vector fields as linear combinations of these. However, we make no claim that this is a basis for the left-invariant vector fields.  For instance, these vector fields may be singular.   None-the-less, we have the following.
\begin{theorem}
Let $(S, \mu, x_0)$ be a bunital Lie semiheap. Then non-zero left-invariant vector fields exists. 
\end{theorem}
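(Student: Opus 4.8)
The plan is to use the construction that has just been set up in the preceding propositions, so the argument is essentially a one-line synthesis. First I would invoke Proposition~\ref{prop:BiLieReach}, which guarantees that every point $x\in S$ is of the form $[x,x_0,x_0] = L_{xx_0}(x_0)$, so that the map $(\rmd L_{xx_0})_{x_0} : \sT_{x_0}S \to \sT_x S$ is defined for every $x$; this is what makes the candidate vector field well defined pointwise. Then, picking any nonzero $v \in \sT_{x_0}S$ (which exists because $\dim S = n \geq 1$ — or, in the degenerate case $\dim S = 0$, the statement is trivially about the zero space and should be read as vacuous or handled separately), I would apply the earlier proposition producing the smooth left-invariant vector field $V : S \to \sT S$, $x \mapsto (\rmd L_{xx_0})_{x_0}v$, with $V(x_0) = v \neq 0$. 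Since $V$ does not vanish at $x_0$, it is a non-zero element of $\Vect_L(S)$, which is exactly the claim.

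Concretely the proof I would write is: choose a basis $\{e_\alpha\}_{\alpha=1,\dots,n}$ of $\sT_{x_0}S$ as in the discussion preceding the theorem, so in particular $n \geq 1$ forces $e_1 \neq 0$; the associated field $E_1(x) := (\rmd L_{xx_0})_{x_0}e_1$ is, by the cited proposition, a well-defined smooth left-invariant vector field, and $E_1(x_0) = (\rmd L_{x_0x_0})_{x_0}e_1 = e_1 \neq 0$ using $L_{x_0x_0} = \Id_S$. Hence $E_1$ is a non-zero left-invariant vector field and the proof is complete.

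The main subtlety — and the only place any care is needed — is that a non-zero vector field in the sense of "not the zero section" is a much weaker object than a nowhere-vanishing one: $E_1$ may well vanish on a large subset of $S$, precisely because $(\rmd L_{xx_0})_{x_0}$ can have nontrivial kernel (as the text itself emphasises). So the proof must only claim non-vanishing \emph{at the base point} $x_0$, and I would state this explicitly to avoid overclaiming. A secondary point worth a remark is the typo-level hypothesis "bunital" in the theorem statement, which I read as "biunital"; the biunitality of $x_0$ is used twice, once through Proposition~\ref{prop:BiLieReach} for well-definedness and once through $L_{x_0x_0} = \Id_S$ for the normalisation $E_1(x_0) = e_1$.

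\begin{proof}
By hypothesis $\dim S = n \geq 1$, so we may choose a basis $\{e_\alpha\}_{\alpha = 1, \dots, n}$ of $\sT_{x_0}S$; in particular $e_1 \neq 0$. By Proposition~\ref{prop:BiLieReach}, every point $x \in S$ satisfies $x = [x, x_0, x_0] = L_{x x_0}(x_0)$, so the linear map $(\rmd L_{x x_0})_{x_0} : \sT_{x_0}S \to \sT_x S$ is defined for all $x \in S$. By the preceding proposition, the assignment $E_1(x) := (\rmd L_{x x_0})_{x_0} e_1$ defines a smooth left-invariant vector field on $S$. Since $L_{x_0 x_0} = \Id_S$ on a biunital Lie semiheap, we have $E_1(x_0) = (\rmd L_{x_0 x_0})_{x_0} e_1 = e_1 \neq 0$. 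Hence $E_1 \in \Vect_L(S)$ is not the zero vector field, which proves the claim. (We stress that $E_1$ need not be nowhere-vanishing: away from $x_0$ the map $(\rmd L_{x x_0})_{x_0}$ may have nontrivial kernel, so $E_1$ may vanish at other points of $S$.)
\end{proof}
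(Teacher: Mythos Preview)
Your proposal is correct and matches the paper's approach exactly: the theorem is stated without a separate proof environment, as it is the immediate consequence of the preceding proposition constructing the smooth left-invariant field $V$ with $V(x_0)=v$ (equivalently the $E_\alpha$ of the preceding discussion). Your write-up simply makes this implicit argument explicit, including the use of $L_{x_0x_0}=\Id_S$ to get non-vanishing at $x_0$.
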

  Recall that we have an isomorphism of categories between the categories of Lie groups and pointed Lie heaps, see Theorem \ref{trm:GrpHeapIso}. We then have the following expected result. 
  \begin{proposition}
  Let $(G, m, i)$ be a Lie group. Then the set of left-invariant vector fields on $G$ coincides with the set of left-invariant vector fields on the associated pointed Lie heap $\mathcal{H}(G) =  (G,\mu, e)$. 
  \end{proposition}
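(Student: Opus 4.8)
The plan is to show that the two notions of left-invariance coincide by unwinding the definitions and using the explicit relationship between the group multiplication and the heap ternary product, $[g_1,g_2,g_3] = g_1 g_2^{-1} g_3$, together with the fact that the distinguished point of $\mathcal{H}(G)$ is the identity $e$. Recall that for a Lie group $(G,m,i)$, a vector field $X \in \Vect(G)$ is left-invariant in the classical sense if $(\rmd \ell_a)_b X_b = X_{ab}$ for all $a,b \in G$, where $\ell_a(g) = ag$; equivalently $\ell_a^* \circ X = X \circ \ell_a^*$ for all $a \in G$. On the heap side, $X$ is left-invariant if $(\rmd L_{x_1 x_2})_z X_z = X_{[x_1,x_2,z]}$ for all $x_1,x_2,z \in G$, where $L_{x_1 x_2}(z) = [x_1,x_2,z] = x_1 x_2^{-1} z$.

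First I would observe that each heap-left-translation is a group-left-translation: $L_{x_1 x_2} = \ell_{x_1 x_2^{-1}}$, since $L_{x_1 x_2}(z) = x_1 x_2^{-1} z = \ell_{x_1 x_2^{-1}}(z)$. Hence $\{L_{x_1 x_2} : (x_1,x_2) \in G^{(2)}\} \subseteq \{\ell_a : a \in G\}$ as maps $G \to G$. Conversely, every group-left-translation arises this way: given $a \in G$, take $x_1 = a$ and $x_2 = e$, so that $L_{a\, e}(z) = a e^{-1} z = az = \ell_a(z)$. Therefore $L(S_G) = \{\ell_a : a \in G\}$ as sets of self-maps of $G$, and in particular the collections of pullback operators $\{L^* : L \in L(S_G)\}$ and $\{\ell_a^* : a \in G\}$ coincide. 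Since left-invariance in either sense is precisely the condition that $X$ (viewed as a derivation, or via its action on functions) commutes with every operator in the respective collection, and these collections are equal, a vector field satisfies one condition if and only if it satisfies the other. This gives the set equality of the two spaces of left-invariant vector fields.

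I would phrase the final write-up to use the operator formulation $\ell^* \circ X = X \circ \ell^*$ so that the argument is a clean one-line consequence of $L(S_G) = \{\ell_a\}$, rather than manipulating $\rmd$-maps pointwise. There is essentially no obstacle here: the only thing to be careful about is the direction of the inclusions — showing not just that heap-translations are group-translations but also the reverse — and this is exactly where the biunitality of $e$ in $\mathcal{H}(G)$ (i.e. $[a,e,e] = a$) or the trivial computation with $x_2 = e$ is used. One could also remark, as a sanity check consistent with Theorem \ref{trm:GrpHeapIso}, that this is the "vector field" shadow of the isomorphism of categories $\catname{LieGrp} \cong \catname{LieHp}_*$: the underlying manifold, the distinguished point, and the family of left-translation maps are literally the same data on both sides, so any construction depending only on that data — in particular $\Vect_L$ — agrees.
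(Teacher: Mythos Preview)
Your proposal is correct and follows essentially the same route as the paper: both arguments hinge on the identification $L_{x_1 x_2} = \ell_{x_1 x_2^{-1}}$ for one inclusion and the specialisation $x_2 = e$ (giving $L_{a\,e} = \ell_a$) for the reverse, so that the two families of left-translations coincide. The only cosmetic difference is that the paper verifies the two inclusions pointwise via the differentials $(\rmd L_{xy})_z$, whereas you package the argument as an equality of the sets of translation maps and then invoke the operator condition $l^* \circ X = X \circ l^*$; these are the same proof in different clothing.
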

  \begin{proof}
  Assume $X \in \Vect(G)$ is left-invariant on the associated Lie heap $\mathcal{H}(G)$, i.e., for all $x,y$ and $z\in G$, $(\rmd L_{xy})_z X_z =  X_{[x,y,x]}$. Given that $[x,y,z] = x\cdot  y^{-1} \cdot z$ implies that, setting $y =e$ (the group identity), $L_ {xe} =  L_x$ (on the right we mean left-translations in the group sense). Thus, 
  $$(\rmd L_x)_z X_z =(\rmd L_{x e})_z X_z  =  X_{[x,e,z]} = X_{x\cdot z}\,,$$
  and so $X$ is left-invariant in the group sense.  In other words, $\Vect_L(\mathcal{H}(G))\subset \Vect_L(G)$.\par 
  In the other direction, assume that $X$ is left-invariant in the group sense. Then sending $x \mapsto x\cdot y^{-1}$ for an arbitrary point $y \in G$, shows that 
  $$(\rmd L_{xy})_z X_z = (\rmd L_{x \cdot y^{-1}})_z X_z = X_{x\cdot y^{-1}\cdot z} = X_{[x,y,z]}\,,$$
  and so $\Vect_L(G)\subset \Vect_L(\mathcal{H}(G))$. In conclusion, as one is the subset of the other and vice versa, $\Vect_L(G)= \Vect_L(\mathcal{H}(G))\,.$
  \end{proof}
  As sets and indeed real vector spaces $\Vect_L(G)$ and  $\Vect_L(\mathcal{H}(G))$ are isomorphic. This is also true of the Lie algebras as the bracket is unchanged on the identification of the two vector spaces.  Given that the categories of  Lie groups and pointed Lie heaps are isomorphic  we make the following observation.
  \begin{corollary}
  The Lie algebra of left-invariant vector fields on a pointed Lie heap $(S, \mu, x_0)$ is of finite dimension $n = \dim S$, and is isomorphic to the Lie algebra $\mathfrak{g}$ of the associated Lie group $\mathcal{G}(S)$.
  \end{corollary}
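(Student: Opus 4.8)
The plan is to reduce the statement to classical Lie theory by means of the isomorphism of categories established in Theorem \ref{trm:GrpHeapIso} together with the preceding proposition. First I would pass from the pointed Lie heap $(S,\mu,x_0)$ to the Lie group $\mathcal{G}(S) = (S,m,i)$ produced by the groupification functor, noting that its identity element is $x_0$ and its underlying manifold is again $S$. Since heapification and groupification are mutually inverse on objects, $(S,\mu,x_0)$ is precisely $\mathcal{H}(\mathcal{G}(S))$, i.e.\ the pointed Lie heap associated with the Lie group $\mathcal{G}(S)$.

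Next I would apply the preceding proposition with $G := \mathcal{G}(S)$, which yields $\Vect_L(S) = \Vect_L(\mathcal{H}(\mathcal{G}(S))) = \Vect_L(\mathcal{G}(S))$ as subsets of $\Vect(S)$. Because in all cases the bracket on left-invariant vector fields is just the restriction of the ambient commutator bracket on $\Vect(S)$, which depends only on the smooth structure of $S$ (common to the heap and the group), this is an identification of Lie algebras, not merely of vector spaces. Finally, classical Lie theory identifies $\Vect_L(\mathcal{G}(S))$, as a Lie algebra, with $\mathfrak{g} := \Lie(\mathcal{G}(S)) \cong \sT_{x_0}\mathcal{G}(S)$, which has finite dimension $\dim \mathcal{G}(S) = \dim S = n$ since $S$ is a finite-dimensional manifold. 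Chaining these identifications gives $\Vect_L(S)\cong \mathfrak{g}$ as Lie algebras of dimension $n$, as claimed.

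I do not anticipate a genuine obstacle here: the substantive work is already contained in Theorem \ref{trm:GrpHeapIso} and the preceding proposition, and this result is a packaging of those facts. The only point deserving a sentence of care is that every identification in the chain respects the Lie bracket and not just the underlying sets or vector spaces; this is immediate because the relevant bracket is throughout the commutator of vector fields on the one fixed manifold $S$.
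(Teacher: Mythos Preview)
Your proposal is correct and matches the paper's own reasoning essentially verbatim: the paper also derives the corollary by invoking the category isomorphism of Theorem \ref{trm:GrpHeapIso} to write $(S,\mu,x_0)=\mathcal{H}(\mathcal{G}(S))$, applying the preceding proposition to identify $\Vect_L(S)$ with $\Vect_L(\mathcal{G}(S))$, and observing that the commutator bracket is unchanged so the identification is one of Lie algebras. Your extra sentence flagging that the bracket is the ambient commutator on the single underlying manifold $S$ is exactly the point the paper makes in the paragraph immediately preceding the corollary.
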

  \subsection{Towards Multiplicative Structures} In this subsection we will make some preliminary definitions of what one could mean by multiplicative functions, forms and vector fields on a Lie semiheap. We will defer the question of their existence and further properties for now.  For a review of the notion of multiplicative structures on Lie groups and groupoids the reader can consult Kosmann-Schwarzbach \cite{Kosmann-Schwarzbach:2016}.
\par 
  Recall that $\R$ can be considered with its standard topology and smooth structure as a Lie (semi)heap with the ``$+ - +$'' operation, i.e.,
  $$[u,v,w]_\R := u-v+w\,.$$
  Functions on $S$ are, by definition, smooth maps $S \rightarrow \R$. We are thus led to the following.
  \begin{definition}
 Let $(S, \mu)$ be a Lie semiheap. A function $f \in \C^\infty(S)$ is said to me a \emph{multiplicative function} if it is a Lie semiheap homomorphism from $(S, \mu)$ to $(\R, [-,-,-]_\R)$, i.e.,
  $$f([x,y,z]) = f(x)- f(y) + f(z)\,,$$
  for all $x,y$ and $z  \in S$. If $S$ is a pointed Lie semiheap, then we further insist on the condition $f(x_0) =0$ for a function to be multiplicative. 
  \end{definition}
  \medskip 
  
  We notice that the zero function, i.e., $f(x) =0$ for all $x \in S$, is trivially multiplicative. As for $k$-forms, we make the following definition.
  \begin{definition}
  Let $(S, \mu)$ be a Lie semiheap. A $k$-form $\Theta \in \Omega^k(S)$ is said to be a \emph{multiplicative $k$-form} if 
  $$\mu^* \Theta = \textrm{pj}_1^*\Theta - \textrm{pj}_2^*\Theta + \textrm{pj}_3^*\Theta\,, $$
  where $\textrm{pj}_i : S^{(3)} \longrightarrow S$ is the projection onto the $i$-th factor.
  \end{definition}
 We directly observe that for multiplicative $0$-forms
 $$(\mu^*f)(x,y,z) =  f ([x,y,z]) = f(x)- f(y)+ f(z)\,,$$
 and so multiplicative $0$-forms are precisely multiplicative functions. \par
  Recall that given a vector field $X \in \Vect(X)$, its local flow is a smooth map 
  $$\Phi:  I \times S \longrightarrow S\,,$$
  where $ I \subset \R$ is an open interval containing $0$, such that
  \begin{enumerate}
  \item $\Phi(0, x) = x$ for all $x \in S$, and 
  \item $\left.\frac{\rmd}{ \rmd t}\right|_{t = t_0}\, \Phi(t,x) =  X_{\Phi(t_0,x)}$ for all $x  \in S$ and $t_0 \in I$.
  \end{enumerate}
  For convenience we write $\Phi_t(-):=  \Phi(t, -)$.
  \begin{definition}
  Let $(S, \mu)$ be a Lie semiheap. A vector field $S \in \Vect(S)$ is said to be a \emph{multiplicative vector field} if its local flow is a Lie semiheap homomorphism, i.e.,
  $$\Phi_t[x,y,z]  =  [\Phi_t(x), \Phi_t(y), \Phi_t(z)]\,,$$
  for all $t \in I$ and $x,y,z \in S $.
  \end{definition}
\subsection{The Ternary Coalgebraic Structure of Functions}
Recall that $C^\infty(S)$ is a nuclear Fr\'{e}chet algebra, and so $C^\infty(S) \, \widehat{\otimes}\,  C^\infty(S)\cong C^\infty(S\times S)$ with respect to any reasonable topology, for instance the  projective and injective topologies (see for example \cite[Part III]{Treves:2006}).    Similar statements hold for any finite number of (suitably completed) tensor products. When required, we will denote the multiplication map in $C^\infty(S)$ as $\mathrm{m} : C^\infty(S) \times C^\infty(S) \rightarrow C^\infty(S)$, and the unit map as $\eta : \star \rightarrow C^\infty(S)$ (the unit function $\Id_{C^{ \infty}(S)}$ is the constant function with value $1$, i.e., $\Id_{C^{ \infty}(S)}(x) =  1$ for all $x\in S$).
\begin{definition}
Let $(S, \mu)$ be a Lie semiheap. Then the associated \emph{canonical ternary comultiplication} $\Delta : \, C^{\infty}(S) \rightarrow C^\infty(S) \, \widehat{\otimes}\,  C^\infty(S)\, \widehat{\otimes} \, C^\infty(S) $
 is defined as $\Delta f := \mu^* f =  f \circ \mu$.
 \end{definition}
 More explicitly, for basic elements we can write $\Delta f(x_1\otimes x_2 \otimes x_3) = f \big ( [x_1, x_2, x_3] \big)$.
 \begin{example} 
 Consider a Lie group $G$ and its associated heap $S_G$. Then given any function $f$ on $S_G$ (and so on $G$) $\Delta f(g_1 \otimes g_2 \otimes  g_3) :=  f(g_1 g_2^{-1}g_3)$. This should be compared with the usual comultiplication on the algebra of functions on a Lie group.
 \end{example}
 \begin{proposition}
 Let $(S, \mu)$ be a Lie semiheap. Then the associated canonical ternary comultiplication satisfies the following properties.
 \begin{enumerate}
 \itemsep=10pt
 \item $\Delta$ is $\R$-linear;
 \item $\Delta(f_1 f_2) =  \Delta f_1 \, \Delta f_2$ for all $f_1, f_2 \in C^\infty(S)$;
 \item $\Delta \circ \eta = \eta^{(3)}$, where $\eta^{(3)}$ is the unit map for $C^\infty(S\times S \times S)$;
 \item The following identity holds
 \begin{equation}\label{eqn:ParaCoASS}
 (\Id^{(2)}\otimes \Delta)\circ \Delta = (\Id \otimes (\Delta \circ s_{12})\otimes \Id)\circ \Delta  = (\Delta \otimes \Id^{(2)})\circ \Delta\,.
 \end{equation}
 \end{enumerate}
 \end{proposition}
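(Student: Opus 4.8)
The plan is to prove all four items by dualization: each is the image under the contravariant assignment $C^\infty(-)$ of a corresponding property of the ternary multiplication $\mu$, once we use the identifications $C^\infty(S^{(k)}) \cong C^\infty(S)\,\widehat{\otimes}\cdots\widehat{\otimes}\,C^\infty(S)$ ($k$ factors) recalled at the start of this subsection, valid because $C^\infty(S)$ is nuclear Fr\'{e}chet. Throughout, $\Delta = \mu^*$ is simply pullback of functions along the smooth map $\mu\colon S^{(3)}\to S$.

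Items (1)--(3) are then instances of the elementary fact that pullback along any smooth map is a unital $\R$-algebra homomorphism. Concretely: (1) holds since $(af_1+bf_2)\circ\mu = a(f_1\circ\mu)+b(f_2\circ\mu)$; (2) holds since $(f_1f_2)\circ\mu = (f_1\circ\mu)(f_2\circ\mu)$, the right-hand side being the pointwise product in $C^\infty(S^{(3)})$, which is precisely the algebra product transported to $C^\infty(S)\,\widehat{\otimes}\,C^\infty(S)\,\widehat{\otimes}\,C^\infty(S)$; and (3) holds since the unit map $\eta$ selects the constant function $1 \in C^\infty(S)$, and $1\circ\mu$ is the constant function $1 \in C^\infty(S^{(3)})$, i.e.\ the value of $\eta^{(3)}$. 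Hence $\Delta\circ\eta = \eta^{(3)}$.

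For (4) I would apply $C^\infty(-)$ to the commuting para-associativity square of Definition \ref{def:SemiHeap}. The three equal composites $S^{(5)}\to S$ appearing there are $\mu\circ(\Id^{(2)}\times\mu)$, $\mu\circ(\Id\times(\mu\circ s_{13})\times\Id)$ and $\mu\circ(\mu\times\Id^{(2)})$; pulling these back and using that $C^\infty(-)$ converts Cartesian products of manifolds to completed tensor products (as recalled above) and identity maps to identity maps turns the first and third into $(\Id^{(2)}\otimes\Delta)\circ\Delta$ and $(\Delta\otimes\Id^{(2)})\circ\Delta$ respectively. The only point needing care is the middle composite: the pullback of $\Id\times(\mu\circ s_{13})\times\Id$ acts on the middle tensor factor as $\Delta$ followed by the pullback $s_{13}^*$, and under the identification $C^\infty(S^{(3)})\cong C^\infty(S)\,\widehat{\otimes}\,C^\infty(S)\,\widehat{\otimes}\,C^\infty(S)$ this $s_{13}^*$ is exactly the flip of tensor factors occurring in the middle term of \eqref{eqn:ParaCoASS}; this produces that term. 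Alternatively, and perhaps more cleanly, one can just evaluate all three expressions on basic elements $x_1\otimes x_2\otimes x_3\otimes x_4\otimes x_5$ using $\Delta f(x_1\otimes x_2\otimes x_3)=f([x_1,x_2,x_3])$ and read off the three bracketings $[x_1,x_2,[x_3,x_4,x_5]]$, $[x_1,[x_4,x_3,x_2],x_5]$, $[[x_1,x_2,x_3],x_4,x_5]$, which coincide by para-associativity of $\mu$.

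I do not expect a genuine obstacle here: the content is entirely formal, resting on functoriality of $C^\infty(-)$ and para-associativity of $\mu$. The only delicate bookkeeping is in (4), namely keeping straight the order of tensor factors produced when one dualizes the ``reversal'' map $s_{13}$, and making sure the isomorphism $C^\infty(S)\,\widehat{\otimes}\,C^\infty(S)\,\widehat{\otimes}\,C^\infty(S)\cong C^\infty(S^{(3)})$ is applied consistently so that $s_{13}^*$ is intertwined with the corresponding factor flip.
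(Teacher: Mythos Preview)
Your proposal is correct and follows essentially the same approach as the paper: items (1)--(3) are dispatched by the fact that pullback along a smooth map is a unital $\R$-algebra homomorphism, and item (4) is obtained by applying $f\circ(-)$ to the para-associativity identity. Your ``alternative'' verification of (4) by evaluating on basic elements $x_1\otimes\cdots\otimes x_5$ is in fact exactly what the paper does; your functorial phrasing via $C^\infty(-)$ is a mild repackaging of the same argument, and your caution about tracking the flip $s_{13}^*$ versus the notation in \eqref{eqn:ParaCoASS} is well placed.
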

\begin{proof}\
\begin{enumerate}
\itemsep=10pt
\item $\R$-linearity is clear as $\Delta$ is defined by the pullback of the ternary multiplication.
\item Similarly, the pullback is an algebra homomorphism, so we have the result.
\item $\big (\Delta \circ \eta \big)(\textrm{pt})(x_1 \otimes x_2 \otimes x_3)= \Delta \Id_{C^\infty(S)}(x_1 \otimes x_2 \otimes x_3) =\Id_{C^\infty(S)}[x_1 , x_2 , x_3] =  1 $. As $x_i$ are arbitrary, we have the result.
 \item Let $f \in C^{\infty}(S)$ be an arbitrary smooth function. The para-associative law for the semiheap ternary multiplication  means 
$$f([x_1, x_2 , [x_3, x_4, x_5]]) = f([x_1, [x_4, x_3, x_2], x_5]) = f([[x_1, x_2, x_3], x_4, x_5])\, ,$$
which can be written as 
$$(\Id^{(2)}\otimes \Delta) \Delta f = (\Id \otimes (\Delta \circ s_{12})\otimes \Id)\Delta f  = (\Delta \otimes \Id^{(2)})\Delta f \,.$$
\end{enumerate}
\end{proof} 
 We refer to the identity \eqref{eqn:ParaCoASS} as \emph{para-coassociativity}. Note that, just as with associativity and coassociativity, para-coassociativity is para-associativity, but with the direction of arrows reversed.
 \begin{remark}
Ternary Hopf algebra were studied by Borowiec,  Dudek \&  Duplij   see \cite{Borowiec:2001}.  Note the structures they consider are not the same as presented here.  Quantum heaps, loosely, Hopf algebras without a counit, were introduced Škoda in \cite{Skoda:2007}.
 \end{remark} 
We then note that $\big(C^\infty(S), \textrm{m}, \eta, \Delta \big )$ consists of a unital associative (commutative) algebra  $\big(C^\infty(S), \textrm{m}, \eta \big )$ and a ``ternary para-coassociative coalgebra'' $\big(C^\infty(S),\Delta \big )$ such that $\Delta$ is a unital associative algebra homomorphism.  This should be compared with the definition of a bialgebra (see, for example \cite[Chapter 2]{Brzezinski:2003}). 
\begin{example}
The set $\R^n$ can be considered as either a vector space or a smooth manifold in the standard ways. Obviously, $\sT_p \R^n \cong \R^n$ for any point $p \in \R^n$. The standard scalar product on the vector space $\R^n$ induces a Riemannian metric on $\R^n$ (as a smooth manifold). That is, 
$$\langle u_p , v_p  \rangle =  u\cdot v = u^i v^j \delta_{ji} \,,$$
where $u = u^ie_i$ (etc.) in some chosen basis of $\sT_p \R^n$.  We view $(u^i , v^j)$ as global coordinates on $\R^n \times \R^n$. The smooth manifold $\R^n$ has the structure of a Lie semiheap, which can be conveniently specified using the pullback of the chosen coordinates.  Specifically 
\begin{align*}
\mu^* &:~ C^\infty(\R^n) \longrightarrow C^\infty(\R^n \times \R^n \times \R^n)\\
& \mu^* x^i =  u^i v^j w^k \delta_{kj}\,,
\end{align*}
were we have coordinates $x^i$ on $\R^n$ and $(u^j, v^k, w^l)$ on $\R^n \times \R^n \times \R^n \cong \R^{3n}$.
\end{example}
%

\subsection{Smooth Semiheap Actions}
The notion of an action of a Lie group on a manifold  generlises to Lie semiheaps. We make the following definition.
\begin{definition}
Let $M$ be a smooth manifold and $(S, \mu)$ be a Lie semiheap. A \emph{right action} of $S$ on $M$ is a smooth map
\begin{align*}
\sigma : &~ M \times S^{(2)} \longrightarrow M\\
&(m, x, y) \mapsto \sigma_{xy}(m)
\end{align*}
such that the compatibility condition
$$\sigma_{x_3 x_4}\circ \sigma_{x_1 x_2} = \sigma_{x_1[x_2, x_3, x_4] }\,,$$
holds. A smooth manifold $M$, equipped with a semiheap action will be referred to as a \emph{$S$-space}. 
\end{definition}
We will change notation slightly, where convenient, and set $m \triangleleft (x, y):= \sigma_{xy}(m)$, where $m \in M$ and $(x,y) \in S^{(2)}$.
\begin{remark}
The notion of a left semiheap action is clear. However, as we are interested in generalising principal bundles, right actions are more natural for our purposes. 
\end{remark}
A right semiheap action can be considered as a map $\tau : S^{(2)} \rightarrow \Hom_{\catname{Man}}(M,M)$. By fixing $x_1 =x$, we observe that the compatibility condition is described by a map $ \tau_x : S^{(3)} \rightarrow \Hom_{\catname{Man}}(M,M)$ given by $(x_1, x_2, x_3) \mapsto \tau(x, [x_1,x_2,x_3])$.

\begin{example}
The \emph{trivial action} of a Lie semiheap $S$ on a smooth manifold $M$ is defined by 
$$m \triangleleft (x, y) =m\,,$$
 for all $m \in M$ and $(x,y) \in S^{(2)}$.
\end{example}
\begin{example}
Let $S$ be a Lie semiheap. Then $S$ can be considered as a $S$-space via the right translation map (see Definition \ref{def:RLActions}).  Note that, in general, the right translation map $R_{x_1 x_2} :  S \rightarrow S$ is not a diffeomorphism. 
\end{example}
\begin{example}
Let $\psi : S \rightarrow S'$ be a homomorphism of Lie semiheaps.  We can then define an action of $S$ on $S'$ as 
\begin{align*}
& S' \times S^{(2)}  \longrightarrow S'\\
&(y, (x_1, x_2)) \mapsto [y, \psi(x_1), \psi(x_2)]'
\end{align*}
The observation that 
$$[[y, \psi(x_1), \psi(x_3)]', \psi(x_4), \psi(x_5)]' =[y, \psi(x_1),\psi[ x_3, x_4, x_5]]' \,,$$
which follows from para-associativity and the definition of a homomorphism of semiheaps, establishes that we have constructed an action in this way.
\end{example}
\begin{example} 
As a specific case of the above example, consider the affine line $\mathbb{A}$ equipped with it's heap structure $\{t_1, t_2, t_3\} = t_1-t_2 + t_3$. This is clearly a Lie heap with respect to the standard smooth structure. Let $(S, \mu)$ be an arbitrary Lie semiheap, and let $\varphi : \mathbb{A} \rightarrow S$ be a homomorphism of Lie semiheaps. Then we have a Lie heap action
\begin{align*}
& \sigma : \, S \times \mathbb{A}^{(2)} \longrightarrow S\\
& (x, (t_1, t_2)) \longmapsto [x,\varphi(t_1), \varphi(t_2)]
\end{align*}
\end{example}
\begin{definition}
Let $(S, \mu)$ be a Lie semiheap and let $M$ and $N$ be $S$-spaces. Then a smooth map $\psi :  M \rightarrow N$ is said to be \emph{$S$-equivariant} if for all $m \in M$ and $(x,y) \in S^{(2)}$
$$\psi \big( m \triangleleft (x,y) \big) = \psi(m)\triangleleft (x,y)\,.$$
\end{definition}
\begin{example}\label{exp:GSpace}
Let $G$ be a Lie group and $M$ be a right $G$-space. We denote the action $M \times G  \rightarrow M$ as $(m,g) \mapsto a_g(m)$. One can build a (semi)heap action $M \times G^{(2)} \rightarrow M $ by setting
$$(m,g_1 , g_2) \longmapsto a_{g_1^{-1}g_2} =: \sigma_{g_1 g_2}\,,$$
and the ternary product is defined as $[g_1, g_2, g_3]:= g_1 g_2^{-1}g_3$. To show that we do indeed have an action, observe that
$$\sigma_{g_3 g_4} \circ \sigma_{g_1 g_2} = a_{g_3^{-1}g_4}\circ a_{g_1^{-1}g_2}= a_{g_1^{-1}g_2 g_3^{-1}g_4} = a_{g_1^{-1}[g_2, g_3, g_4]} = \sigma_{g_1[g_2, g_3, g_4]}\,. $$
Let $N$ be a another $G$-space equipped with the heap action as above. If $\psi : M \rightarrow N$ is a $G$-equivariant map, then it is also $S_G$-equivariant.
\end{example}
\begin{example}
  As a specific example of the previous example,  recall that a flow on a smooth manifold $M$ is a smooth action of the additive group of real number $(\R, +)$ 
$$\varphi : M \times \R \longrightarrow M\,,$$
such that for all $m \in M$ and, $t_1$ and $t_2 \in \R$ 
\begin{align*}
& \varphi(m,0)= m\,,
& \varphi(\varphi(m, t_1), t_2) =  \varphi(m, t_1 + t_2)\,.
\end{align*} 
The group structure can be replaced by the heap structure $[t_1, t_2, t_3] = t_1 - t_2 + t_3$. A Lie heap action 
\begin{align*}
& \sigma : M \times \R^{(2)} \longrightarrow M\\
& (m, (t_1, t_2)) \longmapsto \varphi(m, - t_1 + t_2)\,.
\end{align*}
\end{example}

The category of $S$-spaces is evident and we denote it by $\catname{Man}_S$, with $\textnormal{Ob}(\catname{Man}_S)$ being (smooth) $S$-spaces and $\textnormal{Hom}(\catname{Man}_S)$ being $S$-equivariant maps.\par 
The \emph{orbit of an element} $m \in M$ is the set of points that can be reached from $m$ using the elements of $S$, i.e., 
$$m \triangleleft S^{(2)} :=  \{ m \triangleleft (x_1, x_2) ~~ |~~ (x_1, x_2)\in S^{(2)} \}\,.$$
However, like semigroup and monoid actions, we do not, in general, have an associated equivalence relation. Thus, we cannot construct the orbit set using equivalence classes as one would with group actions. This needs to be taken into account with the starting definition with semiheap bundles.

\section{Semiheap and Principal Bundles}
\subsection{Semiheap Bundles}
We now proceed to mimic as closely as possible the definition of a principal bundle in terms of a fibre bundle with a compatible group action (see for example \cite[Chapter 9.4]{Nakahara:2003}), but now in the setting of Lie semiheaps. Our approach is to consider a $S$-space together with a compatible local trivialisation. 
\begin{definition}\label{def:SemiHeapBund}
A \emph{semiheap bundle} consists of the following.
\begin{enumerate}
 \itemsep=10pt
\item A $S$-space $P$;
\item A surjective submersion $\pi : P \rightarrow M$, such that the action of $S$ on $M$ is trivial;
\item An open cover $\{ U_i\}_{i \in \mathcal{I}}$ of $M$ and a collection of $S$-equivariant diffeomorphisms 
$$t_i : \pi^{-1}(U_i) \stackrel{\sim}{\rightarrow}U_i \times S\,,$$
 where the action on $U_i \times S$ is right translation on the Lie semiheap, such that the following diagram is  commutative
 \begin{center}
\leavevmode
\begin{xy}
(0,15)*+{\pi^{-1}(U_i)}="a"; (30,15)*+{U_i \times S}="b";%
(30,0)*+{U_i}="c";%
{\ar "a";"b"}?*!/_3mm/{t_i };%
{\ar "a";"c"}?*!/^3mm/{\pi};{\ar "b";"c"}?*!/_5mm/{\textrm{prj}_1};%
\end{xy}
\end{center}
\end{enumerate}
We will denote a semiheap bundle as a triple $(P,M,S)$. The collection $\{(U_i, t_i) \}_{i \in \mathcal{I}}$ we refer to as a \emph{local equivariant trivialisation}.
\end{definition}
Note that the action is trivial on $M$, and so preserves the fibres, i.e., if $\pi(p) = m$, then $\pi(p \triangleleft (x,y))=m$. 
\begin{remark}
We have no notion of a free action as there is no identity element. An action is transitive if for every pair of points $p,q \in \pi^{-1}(m)$, there exists a pair $x,y \in S$ such that $p \triangleleft (x,y) =q$.  We will not insist on transitivity in our definition of a semiheap bundle. This should be compared with the definition of a principal bundle.
\end{remark}
\begin{example}
Any Lie semiheap can be considered as a semiheap bundle over a single point, $\{ m\} \times S \rightarrow \{ m\}$, where  the action is the right translation. 
\end{example}
\begin{example}
A \emph{trivial semiheap bundle} is the Cartesian product $P = M \times S$, where $M$ is a smooth manifold and $S$ a Lie semiheap, together with the canonical projection onto the first factor. The action of $S$ on $P$ is simply the right action, i.e., $(m,x,x_1, x_2)\mapsto (m, [x, x_1, x_2])$.  
\end{example}
\begin{definition}
Let $(P, M,S)$ and $(P', M', S')$ be semiheap bundles. Then a \emph{semiheap bundle homomorphism} is a pair $(\Phi, \psi)$, where $\psi : S \rightarrow S'$ is a Lie semiheap homomorphism, and $\Phi: P \rightarrow P'$ is a (smooth) bundle map (over $\phi : M \rightarrow M'$) that is $\psi$-equivariant in the sense that
$$\Phi(p \triangleleft (x,y)) = \Phi(p) \triangleleft (\psi(x), \psi(y))\,,$$
where $p \in P$, and $x,y \in S$.
\end{definition}
In this way, we obtain the category of semiheap bundles, which we denote as $\catname{SemiBun}$. The objects, $\textnormal{Ob}(\catname{SemiBun})$ are semiheap bundles, and the homomorphisms, $\textnormal{Hom}(\catname{SemiBun})$ are $\psi$-equivariant maps. If $S = S'$ and $\psi = \Id_S$, then we obtain the subcategory of $S$-bundles, which we denote as $\catname{SemiBun}_S$.
\begin{proposition}
Let $(P, M,S)$ be a semiheap bundle. Then each fibre $F_m := \pi^{-1}(m)$ is non-canonically isomorphic as a Lie semiheap to $S$.  
\end{proposition}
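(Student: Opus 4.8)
The plan is to use the local equivariant trivialisation directly. Fix $m \in M$ and choose an index $i \in \mathcal{I}$ with $m \in U_i$. The trivialisation $t_i : \pi^{-1}(U_i) \xrightarrow{\sim} U_i \times S$ is an $S$-equivariant diffeomorphism over $U_i$, so it restricts to a diffeomorphism $t_i|_{F_m} : F_m \xrightarrow{\sim} \{m\} \times S \cong S$. The restriction is still $S$-equivariant: since the diagram with $\textrm{prj}_1$ commutes and the $S$-action on $U_i \times S$ is right translation on the second factor (which fixes the first factor), the action preserves $\{m\} \times S$, and on this slice it is precisely the right translation action $R_{x_1 x_2}$ of $S$ on itself.

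First I would make precise the sense in which $F_m$ is ``a semiheap''. A priori $F_m$ is only an $S$-space, not a semiheap; the claim must be that the bijection $t_i|_{F_m}$ lets us transport the ternary product of $S$ to $F_m$, and that the resulting ternary product is independent of the chosen trivialisation up to canonical isomorphism. So I would define $[\,p_1, p_2, p_3\,]_{F_m} := (t_i|_{F_m})^{-1}\big[\, t_i(p_1), t_i(p_2), t_i(p_3)\,\big]$, where on the right the bracket is the ternary product of $S$ (identifying $\{m\}\times S$ with $S$). This is exactly the construction of Proposition~\ref{prop:InducedSemiHeap}: given the diffeomorphism $\phi := t_i|_{F_m} : F_m \to S$, the manifold $F_m$ inherits a Lie semiheap structure $\mu_\phi$, and by that proposition the structures coming from two different trivialisations $t_i, t_j$ (both giving diffeomorphisms $F_m \to S$) are canonically isomorphic via $(t_j|_{F_m})^{-1}\circ (t_i|_{F_m})$. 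Hence $F_m$ carries a well-defined (up to canonical isomorphism) Lie semiheap structure, and $\phi = t_i|_{F_m}$ is by construction a Lie semiheap isomorphism onto $S$.

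The remaining point is to confirm that the inherited ternary product on $F_m$ is compatible with the $S$-action in the expected way, i.e.\ that the isomorphism $F_m \cong S$ is genuinely an isomorphism of $S$-spaces where $S$ acts on itself by right translation; this is immediate because $t_i$ is $S$-equivariant and the action on $U_i \times S$ is right translation on $S$, so restricting to the fibre gives exactly $R_{x_1 x_2}$. The word ``non-canonically'' in the statement is accounted for precisely by the dependence on the choice of $i$ (equivalently, on the choice of diffeomorphism $\phi$): different choices give isomorphic but generally not equal semiheap structures, just as in Proposition~\ref{prop:InducedSemiHeap}.

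I do not expect a serious obstacle here: the content is a routine application of Proposition~\ref{prop:InducedSemiHeap} together with the $S$-equivariance of $t_i$. The only mild subtlety is bookkeeping — one must be slightly careful to identify $\{m\}\times S$ with $S$ as Lie semiheaps (not merely as manifolds), and to observe that the right-translation action on $U_i\times S$ restricts correctly to the slice — but this is exactly the structure built into Definition~\ref{def:SemiHeapBund}, so no new idea is required.
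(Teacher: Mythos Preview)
Your proposal is correct and follows essentially the same route as the paper: restrict a trivialisation $t_i$ to obtain a diffeomorphism $F_m \cong S$, invoke Proposition~\ref{prop:InducedSemiHeap} to transport the ternary product, and observe that another choice $t_j$ yields a canonically isomorphic structure via $t_j^{-1}\circ t_i$. Your additional remarks on $S$-equivariance of the restricted trivialisation are sound but go slightly beyond what the paper records; they are not needed for the statement as phrased.
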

\begin{proof}
Let $\{(U_i, t_i) \}_{i \in \mathcal{I}}$ be a local equivariant trivialisation of $(P, M,S)$ and consider $p \in \pi^{-1}(U_i)$ (we set $\pi(p) =m$). Clearly, $\pi^{-1}(m) =: F_m \stackrel{~}{\rightarrow} \{ m\} \times S \cong S$, specifically $t_i(p) = (m,x) \cong x $ as the point $m \in U_i$ is fixed. As standard, the fibre at any point is non-canonically diffeomorphic to $S$.\par 
 From Proposition \ref{prop:InducedSemiHeap} we know how to proceed.  Let $p,q$ and $r \in F_m$ be arbitrary points. Assume that $m \in U_i$. We then define a ternary operation using the local trivialisation as $[p,q,r]_i := t_i^{-1}[t_i(p), t_i(q), t_i(r)]$.  We know this is  an induced Lie semiheap structure on the fibre at $m$. Picking another local trivialisation, say $(U_j, t_j)$, with $m \in U_j$, we know that $t_j^{-1} t_i$ is a canonical isomorphism between the two induced Lie semiheap structures.
\end{proof}
The above proposition tells us that a semiheap bundle is a smooth family of Lie semiheaps for which each member is (non-canonically) diffeomorphic to a given Lie semiheap.
\begin{example}
Consider a Euclidean vector bundle $(E, g)$ of rank $q$. By employing an orthonormal trivialisation $\{ (U_i, t_i)\}_{i \in \mathcal{I}}$ each
$$\pi^{-1}(U_i) \stackrel{t_i}{\longrightarrow} U_i \times \R^q$$
are isometries where $\R^q$ is equipped the standard Euclidean structure, which we denote as $\delta$. Then $\R^q$  can be considered as a semiheap by defining $[x_1, x_2, x_3] := x_1 \, \delta(x_2, x_3)$. Similarly, each fibre can be considered as a semiheap using $g_m$. An action on $E$ can be defined fibrewise  as
\begin{align*}
& E_m \times \R^q \times \R^q  \longrightarrow E_m\\
& (v, x_1, x_2)  \mapsto v \delta(x_1, x_2) = v g_m(t_i^{-1}(x_1),t_i^{-1}(x_2) )\,.
\end{align*}
 To check this is a semiheap action we observe that
 $$g_m(t_i^{-1}(x_1),t_i^{-1}(x_2))g_m(t_i^{-1}(x_3),t_i^{-1}(x_4)) = g_m(t_i^{-1}(x_1),t_i^{-1}(x_2) \, g_m(t_i^{-1}(x_3),t_i^{-1}(x_4)))\,.$$
Note this action is smooth, as it is built from smooth operations, and is trivial on the base $M$.  Furthermore, note that $\delta(x_1, x_2) =  g_m(t_i^{-1}(x_1),t_i^{-1}(x_2) ) = g_m(t_j^{-1}(x_1),t_j^{-1}(x_2) )$, and so the action is well-defined.  Thus, any Euclidean vector bundle can then be considered as a semiheap bundle with the semiheap being the Euclidean space considered as a semiheap.
\end{example}

\subsection{Principal Bundles as Semiheap Bundles}
We now proceed to generalise the heapification functor to the setting of bundles. In particular, principal bundles will provide a class of semiheap bundles, thus showing the category contains interesting and useful objects. 
\begin{proposition}
Any principal bundle $(P, M,G)$ is canonically associated with the semiheap bundle $(P, M, S_G)$.
\end{proposition}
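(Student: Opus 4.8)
The plan is to unwind the definitions and reuse the heapification construction on the group action fibrewise and on trivialisations. Recall that a principal bundle $(P,M,G)$ carries a smooth free right $G$-action $P \times G \to P$, written $(p,g) \mapsto a_g(p)$, a surjective submersion $\pi : P \to M$ with $\pi \circ a_g = \pi$, and a local trivialisation $\{(U_i, \phi_i)\}_{i \in \mathcal{I}}$ by $G$-equivariant diffeomorphisms $\phi_i : \pi^{-1}(U_i) \xrightarrow{\sim} U_i \times G$ commuting with the projections onto $U_i$. I want to produce from this data a semiheap bundle $(P, M, S_G)$ in the sense of Definition~\ref{def:SemiHeapBund}, with $S_G = \mathcal{H}(G) = (G,\mu,e)$ the heapification of $G$.

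First I would equip $P$ with an $S_G$-action. Using Example~\ref{exp:GSpace}, the free right $G$-action on $P$ induces a right semiheap action $\sigma : P \times G^{(2)} \to P$ by $\sigma_{g_1 g_2}(p) := a_{g_1^{-1} g_2}(p)$, and the compatibility $\sigma_{g_3 g_4} \circ \sigma_{g_1 g_2} = \sigma_{g_1 [g_2,g_3,g_4]}$ was already verified there; smoothness is inherited from smoothness of the $G$-action and of inversion and multiplication in $G$. Second, I would observe that $\pi$ is a surjective submersion and, since $\pi$ is $G$-invariant, it is also $S_G$-invariant, i.e.\ $\pi(p \triangleleft (x,y)) = \pi(p)$; thus the induced action on $M$ is trivial, giving item~(2) of Definition~\ref{def:SemiHeapBund}. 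Third, I would take the \emph{same} open cover $\{U_i\}$ and the same maps $t_i := \phi_i : \pi^{-1}(U_i) \to U_i \times G = U_i \times S_G$. These are already diffeomorphisms commuting with the projection to $U_i$; it remains to check $S_G$-equivariance. Since $\phi_i$ is $G$-equivariant with respect to right translation in $G$, we have $\phi_i(a_h(p)) = \phi_i(p) \cdot h$ for the $G$-action on $U_i \times G$ given by $(m,k) \cdot h = (m, kh)$. Therefore
\begin{align*}
\phi_i\big(\sigma_{g_1 g_2}(p)\big) = \phi_i\big(a_{g_1^{-1}g_2}(p)\big) = \phi_i(p) \cdot (g_1^{-1} g_2),
\end{align*}
and writing $\phi_i(p) = (m,k)$ this equals $(m, k g_1^{-1} g_2) = (m, [k,g_1,g_2]) = (m,k) \triangleleft (g_1,g_2)$, which is precisely the right-translation action of $S_G$ on $U_i \times S_G$. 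Hence each $t_i$ is $S_G$-equivariant and the defining diagram commutes, giving item~(3).

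That establishes $(P,M,S_G)$ is a semiheap bundle; the word ``canonically'' refers to the fact that all choices (the cover, the trivialising maps, the action) are literally the ones supplied by the principal bundle structure, with the group operations repackaged through the heapification functor $\mathcal{H}$ of Definition~\ref{def:HeapFunc}, and functoriality of $\mathcal{H}$ (Proposition~\ref{prop:HeapFunFulFaith}) ensures compatibility with transition functions and bundle morphisms. I expect no serious obstacle here: the only point requiring a little care is bookkeeping the conventions, namely that the heap action $\sigma_{g_1 g_2} = a_{g_1^{-1} g_2}$ must match right translation $[k,g_1,g_2] = k g_1^{-1} g_2$ on the trivialising chart rather than, say, $[g_1,g_2,k]$; getting this ordering consistent with Definition~\ref{def:RLActions} and Example~\ref{exp:GSpace} is the main thing to verify, and the computation above shows it works.
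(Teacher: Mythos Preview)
Your proof is correct and follows essentially the same approach as the paper: both define the $S_G$-action via Example~\ref{exp:GSpace} as $p \triangleleft (g_1,g_2) = a_{g_1^{-1}g_2}(p)$, note that the surjective submersion condition is inherited, and then verify $S_G$-equivariance of the existing $G$-equivariant trivialisations by the same computation $t_i(p \triangleleft (g_1,g_2)) = (m, g g_1^{-1} g_2) = (m, [g,g_1,g_2])$. Your version is slightly more detailed (you spell out smoothness and the triviality of the induced action on $M$), but there is no substantive difference.
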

\begin{proof}
Let $(P,M,G)$ be a principal bundle, and we denote the principal action (which is free and transitive) as $(p,g)\mapsto a_g(p)$. We can build a semiheap action via Example \ref{exp:GSpace}, that is, we set $p \triangleleft (g_1, g_2) := a_{g_1^{-1}g_2}(p)$ and $[g_1,g_2, g_3]:=  g_1 g_2^{-1}g_3$. Thus, $P$ is a $S_G$-space,  and so we have the first part of Definition \ref{def:SemiHeapBund}. The second part is automatic as $\pi : P \rightarrow M$ is a surjective submersion. By definition, we have a local $G$-equivariant trivialisation of a principal bundle, $\{(U_i, t_i )\}_{i \in \mathcal{I}}$. Let us consider a given point $p \in P$ and set $t_i(p) = (m,g)$.  Then, using the $G$-equivalence of each $t_i$
$$t_i(p \triangleleft  (g_1, g_2)) =  t_i\big (a_{g_1^{-1}g_2}(p) \big) =   \big(m, gg_1^{-1}g_2 \big)  = \big(m, g \triangleleft (g_1,g_2) \big)\,. $$ 
Thus, each $t_i$ is $S_G$-equivariant, and so we have the third part of Definition \ref{def:SemiHeapBund}.
\end{proof}
\begin{proposition}
Canonically associated with any homomorphism of principal bundles $(\Phi, \psi) :  (P, M, G) \rightarrow (P', M', G')$ is a homomorphism of semiheap bundles $(P, M, S_G) \rightarrow (P', M', S_{G'})$.
\end{proposition}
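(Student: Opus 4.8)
The plan is to unwind the definitions and verify that the given data $(\Phi,\psi)$ of a principal bundle homomorphism already satisfies the conditions for a semiheap bundle homomorphism, with respect to the semiheap structures produced by the heapification functor $\mathcal{H}$. Recall that a homomorphism of principal bundles $(\Phi,\psi) : (P,M,G) \to (P',M',G')$ consists of a Lie group homomorphism $\psi : G \to G'$ together with a smooth bundle map $\Phi : P \to P'$ covering some $\phi : M \to M'$, which is $\psi$-equivariant for the principal actions, i.e., $\Phi(a_g(p)) = a'_{\psi(g)}(\Phi(p))$ for all $p \in P$ and $g \in G$. We must produce: (i) a Lie semiheap homomorphism $S_G \to S_{G'}$; and (ii) a bundle map $P \to P'$ that is equivariant in the semiheap sense $\Phi(p \triangleleft (x,y)) = \Phi(p) \triangleleft (\psi(x),\psi(y))$.

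First I would take the semiheap homomorphism to be $\psi$ itself, viewed via the heapification functor as a morphism $\mathcal{H}(\psi) = \psi : S_G \to S_{G'}$; by Proposition \ref{prop:HeapFunFulFaith} (or directly, as shown in the example preceding Definition \ref{def:HeapFunc}) a Lie group homomorphism is automatically a Lie semiheap homomorphism for the ternary products $[g_1,g_2,g_3] = g_1 g_2^{-1} g_3$, so no work is needed here beyond citing this. Second, I would take the bundle map to be $\Phi$ itself, still covering $\phi$; it is already smooth and a bundle map, so the only thing to check is $\psi$-equivariance in the semiheap sense. This is a short computation: using the definition of the induced semiheap action from Example \ref{exp:GSpace}, namely $p \triangleleft (g_1,g_2) := a_{g_1^{-1}g_2}(p)$, and the principal-bundle $\psi$-equivariance of $\Phi$, we get
\begin{align*}
\Phi(p \triangleleft (g_1,g_2)) &= \Phi(a_{g_1^{-1}g_2}(p)) = a'_{\psi(g_1^{-1}g_2)}(\Phi(p)) = a'_{\psi(g_1)^{-1}\psi(g_2)}(\Phi(p))\\
&= \Phi(p) \triangleleft (\psi(g_1),\psi(g_2))\,,
\end{align*}
where the third equality uses that $\psi$ is a group homomorphism, hence respects inverses and products, and the last uses the definition of the semiheap action on $P'$.

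Finally I would remark that this assignment is functorial: it sends identities to identities and respects composition, since on both the semiheap-homomorphism slot and the bundle-map slot it does nothing but reinterpret the original data in the new category (this is precisely the pattern already seen with $\mathcal{H}$ and $\mathcal{G}$). There is essentially no obstacle here — the only mild subtlety is bookkeeping: one must be careful that the semiheap action on $P'$ is built from the principal action $a'$ of $G'$ exactly as in Example \ref{exp:GSpace}, and that the ternary product on $S_{G'}$ is $g_1 g_2^{-1} g_3$, so that the phrase "$\psi$-equivariant" in the two senses (principal-bundle sense and semiheap-bundle sense) match up on the nose. The proof is therefore a direct verification; I would present it in three short lines as above and note the functoriality in one sentence.
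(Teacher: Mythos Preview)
Your proposal is correct and follows essentially the same approach as the paper: first observe that $\psi$ is a Lie semiheap homomorphism via the heapification functor, then verify semiheap $\psi$-equivariance of $\Phi$ by the same one-line computation $\Phi(p \triangleleft (g_1,g_2)) = \Phi(a_{g_1^{-1}g_2}(p)) = a'_{\psi(g_1)^{-1}\psi(g_2)}(\Phi(p)) = \Phi(p) \triangleleft (\psi(g_1),\psi(g_2))$. Your additional functoriality remark is appropriate, as the paper immediately uses this proposition to define the bundle heapification functor.
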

\begin{proof}
From Definition \ref{def:HeapFunc}, we see that the Lie group homorphism $\psi : G \rightarrow G'$ is also a Lie semiheap homomorphism.  The equivariance of the map $\Phi$ shows that 
$$\Phi\big(p \triangleleft (g_1, g_2)\big) =  \Phi\big(a_{g^{-1}_1 g_2}(p)\big) = a'_{\psi(g_1)^{-1} \psi(g_2)}(p) = \Phi(p)\triangleleft (\psi(g_1), \psi(g_2))\,,$$
and thus we canonically have a homomorphism of semiheap bundles. 
\end{proof}
The previous two propositions led us to the following definition.
\begin{definition}
The \emph{bundle heapifiction functor} is the functor
$$\mathcal{H}: ~\catname{Prin}  \rightarrow \catname{SemiBun}\,$$
that acts on objects as $(P, M, G ) \mapsto (P, M ,S_G)$ and on homomorphisms it acts as the identity. 
\end{definition}
It is clear that the bundle heapification functor is faithful, however, it is not full. There are more morphisms as semiheap bundles than as principal bundles. We note that in the neighbourhood of every point of $M$, the fibres of a principal bundle can be given the structure of the group $G$ by choosing an element in each fibre to be the identity element. Thus, the fibres are not canonically pointed.  There is, in general, no privileged canonical point associated with an arbitrary principal bundle. This means that there is no immediately obvious way to `force' the bundle heapification functor to be full as we have done for the heapification functor for Lie groups.

\section{Concluding Remarks}
We have made an initial study of Lie semiheaps and semiheap bundles. Importantly, we have constructed heapification functors that shows that Lie groups and principal bundles provide natural examples of Lie semiheaps and semiheap bundles, respectively.\par 
In this introductory paper, we have only made an initial study of ternary operations, and in particular semiheaps, in differential geometry. There are plenty of open questions here:
\begin{itemize}
\item Can we find further examples of Lie semiheaps and bundles that are not directly connected to Lie groups and principle bundles?
\item Can one say more about the existence of left-invariant vector fields on general Lie semiheaps?
\item How much of the theory of connections on principal bundles generalises to the setting semiheap bundles?
\item Can Lie semiheaps be used to describe generalised symmetries in geometric mechanics, for example?
\end{itemize}
More generally, the r\^{o}le ternary operations in differential geometry has hardly been explored. We hope, in part, to rectify this in future publications. 
\section*{Acknowledgements} 
The author thanks Tomasz Brzeziński for introducing him to ternary operators and comments on earlier drafts of this work.

\end{document}